\documentclass[10pt]{amsart}
\usepackage{amsmath, amsthm, amsfonts, amssymb, graphicx, hyperref, bm,verbatim,mathrsfs,siunitx}
\usepackage{stmaryrd,enumerate}
\theoremstyle{plain}
\newtheorem{thrm}{Theorem}[section]
\newtheorem{lmm}[thrm]{Lemma}
\newtheorem{prpstn}[thrm]{Proposition}
\newtheorem{crllry}[thrm]{Corollary}

\newtheorem*{rmk}{Remark}

\newtheorem*{qstn}{Question}

\numberwithin{sblmm}{thrm} 
\numberwithin{equation}{section}

\DeclareMathOperator*{\vol}{vol}

\newcommand{\Mod}[1]{\ (\mathrm{mod}\ #1)}

\parskip 7.2pt
\parindent 0pt
\begin{document}
\title{Simultaneous small fractional parts of polynomials}
\author{James Maynard}
\address{Mathematical Institute, Woodstock Road, Oxford OX2 6GG, UK}
\email{james.alexander.maynard@gmail.com}
\begin{abstract}
Let $f_1,\dots,f_k\in\mathbb{R}[X]$ be polynomials of degree at most $d$ with $f_1(0)=\dots=f_k(0)=0$. We show that there is an $n<x$ such that $\|f_i(n)\|_{\mathbb{R}/\mathbb{Z}}\ll x^{c/k}$ for all $1\le i\le k$ for some constant $c=c(d)$ depending only on $d$. This is essentially optimal in the $k$-aspect, and improves on earlier results of Schmidt who showed the same result with $c/k^2$ in place of $c/k$.
\end{abstract}
\maketitle
%
%
%
%
%
%
%
\section{Introduction}
In this paper we consider the following question:
\begin{qstn}
Given $k$ polynomials $f_1,\dots,f_k\in\mathbb{R}[X]$ of degree at most $d$ with $f_1(0)=\dots =f_k(0)=0$, how small can we make the fractional parts $\|f_1(n)\|_{\mathbb{R}/\mathbb{Z}},\dots,\|f_k(n)\|_{\mathbb{R}/\mathbb{Z}}$ over positive integers $n\le x$?
\end{qstn}
Here $\|\cdot\|_{\mathbb{R}/\mathbb{Z}}$ denotes the distance to the nearest integer. Since the polynomial $f(n)=n+1/2$ certainly doesn't attain arbitrarily small fractional parts, it is natural to impose the condition $f_1(0)=\dots=f_k(0)=0$ so that all polynomials individually can obtain small fractional parts. Indeed, it is known that if $f\in\mathbb{R}[X]$ has degree at most $d\ge 2$ and satisfies $f(0)=0$ then the fractional part $\|f(n)\|_{\mathbb{R}/\mathbb{Z}}$ can become arbitrarily small, and the recent work of Baker \cite{Baker} shows that
\begin{equation}
\min_{n\le x}\|f(n)\|_{\mathbb{R}/\mathbb{Z}}\ll_{d}\frac{1}{x^{1/(2d^2-2d)+o(1)}}.
\label{eq:SinglePoly}
\end{equation}
It is worth emphasizing that the bound depends only on $x$ and $d$, and is otherwise completely uniform over all such polynomials $f$. The exponent $1/(2d^2-2d)$ is based on the resolution of Vinogradov's Mean Value Theorem for $k\ge 4$ by Bourgain-Demeter-Guth \cite{Bourgain}, but estimates of the shape $O(1/d^2)$ were known since the work of Wooley \cite{WooleyAnnals} and estimates of the form $O(1/d^2\log{d})$ go back to Vinogradov \cite{Vinogradov}. The bound \eqref{eq:SinglePoly} is certainly not expected to be tight; for monomials the exponent can be improved to $O(1/d\log{d})$, for example, but it is conjectured \cite{BakerBook} that this should be improvable to $1+o(1)$. Unfortunately there currently does not appear to be a feasible approach to make progress on the shape of these exponents with the current techniques.

In the case of $k$ polynomials $f_1,\dots,f_k\in\mathbb{R}[X]$ of degree at most $d$ with $f_1(0)=\dots =f_k(0)=0$, the current record due to Baker and Harman \cite{BakerHarman} is for some $c_d>0$
\begin{equation}
\min_{n\le x}\max_{i\le k}\|f_i(n)\|_{\mathbb{R}/\mathbb{Z}}\ll_{k,d} \frac{1}{x^{1/(k^2+k c_d)+o(1)}}.\label{eq:ManyPolys}
\end{equation}
This refines the initial groundbreaking work of Schmidt \cite{Schmidt} from 1977 who had a similar exponent of the form $1/2k^2$ when $d=2$. A simple argument based on choosing the coefficients of $f_1,\dots,f_k$ uniformly at random shows that one certainly cannot hope to have a result stronger than
\begin{equation}
\min_{n\le x}\max_{i\le k}\|f_i(n)\|_{\mathbb{R}/\mathbb{Z}}\ll \frac{1}{x^{1/k}}.\label{eq:Optimal}
\end{equation}
These and related questions have been the object of a large amount of study in analytic number theory; see \cite{Madritsch,BakerII,BakerIV,BakerV,BakerVI,VaughanWooley,WooleyII,WooleyIII,BakerIII,Schaffer,Zaharescu} for some recent related work. We refer the reader to the book \cite{BakerBook} for a comprehensive overview of these questions.

Our main result is to establish a bound for \eqref{eq:ManyPolys} with an exponent $O_d(1/k)$. By comparing this with \eqref{eq:Optimal} we see that this bound is of the \textit{optimal} shape in the $k$-aspect. This result is new even in the simplest non-linear case when $f_i(n)=\alpha_i n^2$ for $1\le i\le k$ which corresponds to simultaneous Diophantine approximation with squares. More precisely, our main result is the following.
\begin{thrm}\label{thrm:MainTheorem}
Let $k,d$ be positive integers. There is a constant $C_d>2$ depending only on $d$ and a constant $C_{d,k}>2$ depending only on $d$ and $k$ such that the following holds.

Let $f_1,\dots,f_k\in\mathbb{R}[X]$ be polynomials of degree at most $d$ such that $f_1(0)=\dots=f_k(0)=0$.  Let $\epsilon_1,\dots,\epsilon_k\in(0,1/100]$, and put $\Delta=\prod_{i=1}^k\epsilon_i$.

If $\Delta^{-1}\le x^{1/C_d}$ and $x>C_{d,k}$ then there is a positive integer $n<x$ such that
\[
\|f_i(n)\|_{\mathbb{R}/\mathbb{Z}}\le \epsilon_i\qquad\text{for all $i\in\{1,\dots,k\}$}.
\]
\end{thrm}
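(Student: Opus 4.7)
The plan is to carry out a Fourier-analytic / circle method argument. Choose smooth non-negative bumps $\chi_i: \mathbb{R}/\mathbb{Z} \to \mathbb{R}_{\geq 0}$, each of $L^1$-mass $\asymp \epsilon_i$ and supported on $\|\cdot\|_{\mathbb{R}/\mathbb{Z}} \leq \epsilon_i$, and consider the weighted count
\[
N := \sum_{n \leq x} \prod_{i=1}^k \chi_i(f_i(n)).
\]
It suffices to show $N > 0$. Expanding each $\chi_i$ in its Fourier series yields
\[
N = \sum_{\mathbf{h} \in \mathbb{Z}^k} \Bigl(\prod_{i=1}^k \hat{\chi}_i(h_i)\Bigr) S(\mathbf{h}), \qquad S(\mathbf{h}) := \sum_{n \leq x} e\!\Bigl(\sum_{i=1}^k h_i f_i(n)\Bigr).
\]
The tuple $\mathbf{h}=\mathbf{0}$ contributes a main term of size $\asymp x \Delta \geq x^{1-1/C_d}$, so the task reduces to bounding the contribution of the nonzero tuples by a strictly smaller quantity.

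For any nonzero $\mathbf{h}$, the phase $P_\mathbf{h}(n) := \sum_i h_i f_i(n)$ is a polynomial of degree at most $d$ with $P_\mathbf{h}(0)=0$. If its leading coefficient is not too well approximated by a rational with small denominator, Weyl's inequality in its modern Vinogradov / Bourgain--Demeter--Guth form gives $|S(\mathbf{h})| \ll x^{1-\delta_d}$ for some $\delta_d > 0$ depending only on $d$. Since the chosen bumps satisfy $\sum_{h_i}|\hat{\chi}_i(h_i)| = O(1)$ uniformly in $\epsilon_i$, summing this Weyl bound over all such "minor-arc" $\mathbf{h}$ costs only a $k$-dependent constant factor, yielding an error of size $O_k(x^{1-\delta_d})$. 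Crucially we do not pay a factor of $1/\Delta$ for running over the range of Fourier coefficients, which is the source of the $1/k^2$-type losses in previous approaches; so provided $C_d > 1/\delta_d$ and $x \geq C_{d,k}$, the minor-arc error is safely dominated by the main term.

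The remaining obstacle, which I expect to be by far the hardest part, is controlling the "major-arc" tuples $\mathbf{h}$ for which the leading coefficient $\sum_i h_i \alpha_i^{(d)}$ of $P_\mathbf{h}$ (with $\alpha_i^{(d)}$ the degree-$d$ coefficient of $f_i$) lies near a rational with small denominator. Such $\mathbf{h}$ form a structured set, essentially clustered near a lattice in $\mathbb{Z}^k$ determined by the simultaneous rational approximations to $\alpha_1^{(d)},\dots,\alpha_k^{(d)}$. My plan here is a structure/non-structure dichotomy: either the weighted $\ell^1$-mass of these bad tuples under $\prod_i \hat{\chi}_i$ is shown to be small enough for their contribution to $N$ to be absorbed, or one extracts a common denominator $q$ and passes, via pigeonhole, to a residue class $n \equiv n_0 \Mod{q}$ on which the degree-$d$ term of each $f_i$ becomes effectively integral. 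This reduces the problem to a version of itself with polynomials of degree at most $d-1$ on an arithmetic progression of length $\asymp x/q$, after which an induction on $d$ (the base case $d=1$ being standard simultaneous Diophantine approximation) should close the argument. The most delicate point will be to verify that each layer of the recursion loses only a constant factor depending on $d$, so that the resulting exponent $1/C_d$ is genuinely uniform in $k$.
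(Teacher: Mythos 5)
Your minor-arc analysis is sound and matches the paper's starting point: the smooth bumps, the observation that $\sum_h|\hat{\chi}_i(h)|=O(1)$ so that one pays only $O_k(1)$ over the Fourier range, and Weyl's inequality giving $x^{1-\delta_d}$ off the major arcs are all present in the paper (Lemmas 6.1--6.3). The gap is in your treatment of the major arcs, and it is not a technical gap but a wrong reduction. A large Fourier coefficient at $\mathbf{h}$ only tells you that the \emph{linear combinations} $\sum_i h_i f_{i,j}$ are close to rationals $a_j/q_j$; it tells you nothing about the individual coefficients $f_{i,j}$. If the bad tuples $\mathbf{h}$ span only an $r$-dimensional sublattice of $\mathbb{Z}^k$ with $r<k$ (e.g.\ $k=2$, $f_1=\alpha n^2$, $f_2=(\alpha+\beta)n^2$ with a single relation coming from $\mathbf{h}=(1,-1)$ and $\alpha$ generic), there is no residue class $n\equiv n_0\ (\mathrm{mod}\ q)$ on which the degree-$d$ terms become integral, so your proposed induction on the degree $d$ cannot even get started. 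The correct reduction — the one the paper performs — uses the relations to eliminate polynomials, i.e.\ it reduces the number $k$ of polynomials (by the rank $r$ of the lattice of relations), not the degree.

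Moreover, even granting a reduction in $k$, the entire difficulty of obtaining the exponent $O_d(1/k)$ rather than Schmidt's $O(1/k^2)$ lies in two steps your sketch does not address at all: (i) the bad tuples can come with many \emph{different} denominators $q\le\Delta^{-O(1)}$, and one must prove (Proposition 5.2, via a gcd-graph/sumset expansion argument) that a large subcollection shares a single denominator — your dichotomy ``either the $\ell^1$ mass of bad tuples is small or extract a common denominator'' silently assumes this; and (ii) when the shared denominator $q$ is large, one must drop the dimension by $r\gg\log x/\log q$ \emph{at once}, using a quasi-orthogonal basis from the geometry of numbers (Propositions 5.3 and Lemma 8.1) so that the losses per iteration are controlled by a density-increment inequality; reducing by one polynomial at a time is exactly what produces the $1/k^2$ loss. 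Without these two ingredients the proposal cannot reach the stated theorem.
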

%
%
%
%
Choosing $\epsilon_1=\dots=\epsilon_k=x^{-1/(kC_d)}$ in Theorem \ref{thrm:MainTheorem} gives the improvement mentioned above. In the language of \cite{BakerVII}, this confirms the conjecture that an arbitrary system of polynomials with $f_1(0)=\dots=f_k(0)=0$ has `Heillbronn status'.
%
%
%
%
\begin{crllry}\label{crllry:MainCor}
Let $f_1,\dots,f_k\in\mathbb{R}[X]$ be polynomials of degree at most $d$ such that $f_1(0)=\dots=f_k(0)=0$. Then there is a positive integer $n<x$ such that
\[
\|f_i(n)\|_{\mathbb{R}/\mathbb{Z}}\ll_{d,k} x^{-c_d/k}\qquad\text{for all $i\in\{1,\dots,k\}$}.
\]
Here $c_d>0$ is a constant depending only on $d$, and the implied constant depends only on $d$ and $k$.
\end{crllry}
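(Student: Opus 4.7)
The plan is to apply Theorem \ref{thrm:MainTheorem} directly with equal tolerances, eliminating the freedom in the $\epsilon_i$ by a symmetric choice. Set $c_d := 1/C_d$, where $C_d$ is the constant furnished by Theorem \ref{thrm:MainTheorem} (which depends only on $d$, as required). Then take $\epsilon_1 = \cdots = \epsilon_k = \epsilon$ with
\[
\epsilon := x^{-c_d/k} = x^{-1/(k C_d)}.
\]
With this choice the product $\Delta = \prod_{i=1}^k \epsilon_i$ equals $\epsilon^k = x^{-1/C_d}$, so $\Delta^{-1} = x^{1/C_d}$ and the hypothesis $\Delta^{-1} \le x^{1/C_d}$ of Theorem \ref{thrm:MainTheorem} is met (with equality).

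Next I would verify the remaining hypotheses for $x$ above a suitable threshold $M = M(d,k)$. The requirement $\epsilon \le 1/100$ from Theorem \ref{thrm:MainTheorem} is equivalent to $x \ge 100^{k C_d}$, while $x > C_{d,k}$ comes for free once $x$ exceeds this quantity. Thus for $x \ge M(d,k) := \max(C_{d,k},100^{kC_d})$, Theorem \ref{thrm:MainTheorem} yields a positive integer $n < x$ with
\[
\|f_i(n)\|_{\mathbb{R}/\mathbb{Z}} \le \epsilon = x^{-c_d/k} \qquad \text{for all } i\in\{1,\dots,k\},
\]
which is the desired bound with implied constant $1$.

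The only thing left is the bounded range $1 < x < M(d,k)$. Here $x^{-c_d/k}$ is bounded below by a positive constant depending only on $d$ and $k$, while $\|\cdot\|_{\mathbb{R}/\mathbb{Z}} \le 1/2$ holds universally. Choosing the implied constant in $\ll_{d,k}$ sufficiently large (again a function of $d$ and $k$ only) renders the claimed inequality automatic for $n = 1$, which is a valid choice since $n = 1 < x$. The case $x \le 1$ is vacuous, as no positive integer $n < x$ exists. I expect no real obstacle: the corollary is simply a repackaging of Theorem \ref{thrm:MainTheorem} with the $\epsilon_i$ set equal, and all the substance lies in proving the theorem itself.
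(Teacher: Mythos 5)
Your proof is correct and is exactly the paper's argument: the paper disposes of the corollary in one line by "Choosing $\epsilon_1=\dots=\epsilon_k=x^{-1/(kC_d)}$ in Theorem \ref{thrm:MainTheorem}", which is precisely your symmetric choice. Your additional care with the bounded range of $x$ (absorbing the cases $\epsilon>1/100$ or $x\le C_{d,k}$ into the implied constant via $n=1$) is a correct and welcome tidying of a detail the paper leaves implicit.
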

%
%
%
%
As with previous works, a noteworthy feature of Theorem \ref{thrm:MainTheorem} and Corollary \ref{crllry:MainCor} is that the result is completely uniform over the coefficients of the polynomials with the implied constants depending only on $d$ and $k$.

The proof as given in this paper would yield a constant $c_d$ in Corollary \ref{crllry:MainCor} which is exponentially small in $d$ ($c_d=10^{-d}$ would probably suffice), but it is likely that with only a small amount of additional effort the constant could be taken to be of the form $c_d=C/d^2$ or perhaps even $C/(d+d^2/k)$ for a relatively small explicit absolute constant $C$. In the interests of emphasizing the main ideas we have chosen not to pursue such explicit bounds in the $d$-aspect. Similarly we have made no effort to control the implied constant's dependence on $d$ or $k$, although it is likely that adapting the ideas behind \cite[Proposition A.2]{GreenTao} would give a reasonable and explicit dependence on $k$ and $d$.
%
%
%
%

%
%
%
%
%
%
%
%
\section{Outline}
%
%
%
%
In the interest of simplicity we consider the case when $f_i(n)=\alpha_i n^2$, since this case still has most of the main features of the problem at hand. As in Schmidt's original work, the argument follows an increment strategy, where either the situation looks `random' or there is additive structure allowing us to pass to a self-similar situation with one fewer polynomial. We obtain improved bounds by getting more structural control over the arithmetic nature of the large Fourier coefficients, allowing for a more complicated but more efficient increment strategy (this successfully achieves the challenge mentioned in \cite[Page 5]{BakerBook} of sharpening Schmidt's `determinant argument').

For a generic choice of $\alpha_1,\dots,\alpha_k$, we expect that the vector of fractional parts $\mathbf{v}(n)=(\|\alpha_1 n^2\|_{\mathbb{R}/\mathbb{Z}},\dots,\|\alpha_k n^2\|_{\mathbb{R}/\mathbb{Z}})$ will equidistribute in the torus $\mathbb{R}^k/\mathbb{Z}^k$. Fourier analysis is well-suited to showing such equidistribution, and one finds that given any intervals $I_1,\dots,I_k$ of length $\delta$ (for some small $\delta>0$) and $x>\delta^{-k-o(1)}$, there is an $n<x$ such that $\mathbf{v}(n)\in I_1\times\dots \times I_k$ unless there is a Diophantine relation
\[
h_1\alpha_1+\dots +h_k\alpha_k\approx \frac{a}{q}
\]
for some constants $h_i\le \delta^{-1-o(1)}$ and some $q<\delta^{-O(k)}$. If $\delta>x^{-c/k}$ for some small $c>0$, such a relation is unusual but would mean that it is genuinely \textit{not} the case that $\mathbf{v}(n)$ equidistributes at this scale. (For example, if $\alpha_1=\alpha_2$, then there is clearly not equidistribution.)

Schmidt \cite{Schmidt} addressed this potential issue by restricting to considering integers $n$ such that $n$ was a multiple of $h_1q$ whenever there is such a Diophantine relation (assuming $h_1\ne 0$, as we may do by relabeling the indices.) For such $n$'s we see that
\[
\| \alpha_1 n^2\|_{\mathbb{R}/\mathbb{Z}}\approx \|\sum_{i=2}^k h_i\alpha_i n^2 /h_1\|_{\mathbb{R}/\mathbb{Z}},
\]
and so if we can find $a_1,\dots,a_k\in\mathbb{Z}$ and $n'<x/q h_1$ such that $|\alpha_i (q h_1n')^2-a_i|<\delta^{-1-o(1)}$ for $2\le i\le k$ and such that $\sum_{i=1}^k h_i a_i=0$ then we can find an $n<x$ such that $\|\alpha_i n^2\|_{\mathbb{R}/\mathbb{Z}}<\delta^{-1}$ for $1\le i\le k$. This essentially reduces the problem of finding $n<x$ such that $\|\alpha_i n^2\|_{\mathbb{R}/\mathbb{Z}}$ is small for $1\le i\le k$  to one of finding $m<x\delta^{k+o(1)}$ such that $\|\alpha_i' m^2\|_{\mathbb{R}/\mathbb{Z}}$ is small for $1\le i\le k-1$ for some reals $\alpha'_1,\dots,\alpha'_{k-1}$. Since the problem is now analogous to the original but with one fewer variable, we may repeat the above procedure $O(k)$ times. We maintain a non-trivial range for $n$ provided $\delta>x^{-c/k^2}$ for some small constant $c>0$, which gives Schmidt's result that there is an $n<x$ such that $\|\alpha_i n^2\|_{\mathbb{R}/\mathbb{Z}}<x^{-c/k^2}$ for some constant $c$ independent of $k$ or $\alpha_1,\dots,\alpha_k$.

The above procedure would produce a bound of size $\|\alpha_i n^2\|_{\mathbb{R}/\mathbb{Z}}<x^{-c/k}$ if at each stage the denominator $q$ was of size $\delta^{-O(1)}$ instead of size $\delta^{-O(k)}$. Therefore let us consider the most problematic case when $q$ is of size $\delta^{-O(k)}\approx x^{c}$. In this case one still has suitable equidistribution via Fourier analysis unless there are $\textit{many}$ vectors $(h_1,\dots,h_k)\in[0,\delta^{-1+o(1)}]$ and coprime integers $a,q$ with $q\in[Q,2Q]$ such that 
\[
h_1\alpha_1+\dots +h_k\alpha_k\approx \frac{a}{q}.
\]
The key new idea in our proof is to exploit the fact we have many such relations rather than just one, and that the $h_i$ must lie in an additively structured set, which will show the rationals $a/q$ cannot have many distinct denominators. In fact, we will show that it must be the case that several of these relations must have the \textit{same} denominator $q$, from which we can reduce to a much lower dimensional situation.

If many of the relations do have the same denominator $q$, then there must be many linearly independent solutions with the same denominator, and so (after relabelling) we can find short vectors  $\mathbf{h}^{(1)}=(h_{1}^{(1)},\dots,h_k^{(1)}),\dots,\mathbf{h}^{(r)}=(h_{1}^{(r)},\dots,h^{(r)}_k)$ in $[0,\delta^{-1+o(1)}]^{k}$ such that $(h^{(1)}_1,\dots,h^{(1)}_r), \dots,(h^{(r)}_1,\dots,h^{(r)}_r)$ are linearly independent in $\mathbb{Z}^r$ and if $q|n$ then
\begin{align*}
\Bigl\| \sum_{i=1}^r h_i^{(1)}\alpha_i n^2\Bigr\|_{\mathbb{R}/\mathbb{Z}}&\approx \Bigl\|\sum_{i=r+1}^k h_i^{(1)}\alpha_i n^2\Bigr\|_{\mathbb{R}/\mathbb{Z}},\\
&\vdots\\
\Bigl\|  \sum_{i=1}^r h_i^{(r)}\alpha_r n^2\Bigr\|_{\mathbb{R}/\mathbb{Z}}&\approx \Bigl\|\sum_{i=r+1}^k h_i^{(r)}\alpha_i n^2\Bigr\|_{\mathbb{R}/\mathbb{Z}}.
\end{align*}
If we also restrict to $\det( h^{(i)}_j)_{1\le i,j\le r}|n$ then we find that, similarly to in Schmidt's argument, we can reduce the problem to a lower dimensional one. In this case, however, we reduce the dimension by $r$ rather than just 1, and in fact we can take $r\gg\log{x}/\log{q}$. In the case when we always have $q \approx x^c$ this process terminates after $O(1)$ iterations rather than $O(k)$ iterations, allowing us to have a maintain a non-trivial range of $n$ if $\delta>x^{-c/k}$ for some small constant $c$.

Alternatively, if there are many different denominators which occur - say $Q^{1/100}$ different denominators of size $Q$ - then it turns out we may find a subset of $Q^{1/200}$ of these denominators which are almost all coprime to one another apart from some fixed integer $d$ which divides all of the denominators in this subset. From this coprimality relation, we see that by adding $r$ of these equations together one finds
\[\Bigl\|\sum_{j=1}^r \sum_{i=1}^k \alpha_i h_i^{(j)}\Bigr\|_{\mathbb{R}/\mathbb{Z}}\approx \Bigl\|\frac{a^{(1)}}{q^{(1)}}+\dots+\frac{a^{(r)}}{q^{(r)}}\Bigr\|_{\mathbb{R}/\mathbb{Z}}\ge \frac{1}{q^{(1)}\cdots q^{(r)}}\]
In particular, we see that almost all combinations of $r/2$ of the equations are distinct, and so there are $\gg Q^{r/200}$ different non-zero combinations. However, we also have that the number of different choices of the coefficients of the $\alpha_i$ in these relations is bounded by $r^k\delta^{-k-o(1)}$. This is less than $Q^{r/200}$ if $r$ is sufficiently large, giving a contradiction. Hence there cannot be many relations with different denominators $q$.

The above sketch is too simplistic in multiple ways - there are quantitative issues if the vectors $\mathbf{h}^{(j)}$ are not essentially orthogonal to each other, and one actually needs to find suitable low height relations to avoid an accumulation of losses through the induction procedure. These can be achieved by exploiting ideas from the geometry of numbers. It is also the case (and was even in Schmidt's original argument) that it is necessary to consider more general approximations by lattice vectors of the vector of polynomials such that the difference lies in a given convex set, which is essentially equivalent to considering approximations of the form $\|f_i(n)\|_{\mathbb{R}/\mathbb{Z}}\le \epsilon_i$ for some given reals $\epsilon_1,\dots,\epsilon_k$.
%
%
%
%
\begin{rmk}
It is interesting to note that the above argument can be interpreted as a density-increment argument in the style of Roth's theorem on arithmetic progressions. This is the first setting which we are aware of when such a strategy produces essentially optimal polynomial-type bounds in a non-trivial situation. Moreover, it has been speculated (see \cite{Gowers}) that even in less structured problems one might hope to have either a small density increment on a `small codimension' set, or a large density increment on a `large codimension' set. We achieve something very much along these lines in this (more structured) setting of fractional parts of polynomials.
\end{rmk}
%
%
%
%
\section{Acknowledgements}
%
%
%
%
J.M. would like to thank Ben Green for introducing him to this question. J.M. was supported by a Royal Society Wolfson Merit Award, and funding from the European Research Council (ERC) under the European Union's     Horizon 2020 research and innovation programme (grant agreement No 851318).
%
%
%
%
\section{Notation}
%
%
%
%
Throughout the paper we assume that we have polynomials $f_1,\dots,f_k\in\mathbb{R}[X]$ of degree at most $d$ with $f_1(0)=\dots=f_k(0)=0$. We let these polynomials be given by $f_i(X)=\sum_{j=1}^d f_{i,j}X^j$. Furthermore, we have reals $\epsilon_1,\dots,\epsilon_k\in(0,1/100]$, and we put $\Delta:=\prod_{i=1}^k\epsilon_i$.

To avoid any confusion about the quantifiers in statements of the form `if $A\ll 1$ then $B\ll 1$, with all implied constants depending only on $d$ and $k$', we emphasize that we take this statement to mean for any positive function $f(d,k)$ of $d$ and $k$, there is a positive function $g_f(d,k)$ depending only on $f$ such that if $|A|\le f(d,k)$ then we have $|B|\le g(d,k)$.
%
%
%
%
\section{The main argument}
%
%
%
%
Our proof of Theorem \ref{thrm:MainTheorem} relies on three key propositions. In this section we show how the theorem follows quickly from the propositions, leaving us with the task of establishing the propositions separately from one another. 

Our first proposition is a standard result which follows from Weyl's bound for polynomial exponential sums.
%
%
%
%
\newpage
\begin{prpstn}[Equidistribution or many linear relations]\label{prpstn:Equidistribution}
Let $f_1,\dots,f_k\in\mathbb{R}[X]$ be polynomials of degree at most $d$ such that $f_1(0)=\dots=f_k(0)=0$. Put $f_i(X)=\sum_{j=1}^d f_{i,j} X^j$. Let $\epsilon_1,\dots,\epsilon_k\in(0,1/100]$, and put $\Delta=\prod_{i=1}^k\epsilon_i$.

Then there is a constant $C_d>0$ depending only on $d$ such that, provided $\Delta^{-C_d}<x$, at least one of the following holds:
\begin{enumerate}
\item We have
\[
\#\{n\le x:\, \|f_i(n)\|_{\mathbb{R}/\mathbb{Z}}<\epsilon_i\,\forall i\}\gg x\prod_{i=1}^k\epsilon_i.
\]
\item There is some $Q\le\Delta^{-C_d}$ such that there are at least $Q^{1/C_d}$ triples $(\mathbf{a},\mathbf{q},\mathbf{h})\in\mathbb{Z}^d\times\mathbb{Z}^d\times\mathbb{Z}^k$ satisfying:
\begin{enumerate}
\item $\gcd(a_j,q_j)=1$ and $1\le q_j\le Q$ for $1\le j\le d$.
\item $h_i\ll \epsilon_i^{-1}\Delta^{-1/(2k)^4}$ for $1\le i\le k$.
\item For each $j\in \{1,\dots,d\}$ we have
\[
\sum_{i=1}^k h_i f_{i,j}=\frac{a_j}{q_j}+O\Bigl(\frac{Q^{C_d}}{x^{j}}\Bigr).
\]
\end{enumerate}
\end{enumerate}
All implied constants depend only on $d$ and $k$.
\end{prpstn}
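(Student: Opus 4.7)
The plan is to use Fourier analysis on the torus to count the desired $n$, and to extract from any deficiency in that count many frequencies $\mathbf{h}$ at which the associated polynomial exponential sum is large; Weyl-type estimates on each such sum then produce the rational approximations claimed in case~(2).

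First I would choose a non-negative trigonometric polynomial minorant $\phi_i$ of degree $H_i := \lfloor \epsilon_i^{-1}\Delta^{-1/(2k)^4}\rfloor$ satisfying $0\le\phi_i\le\mathbf{1}_{\|\cdot\|<\epsilon_i}$, $\hat\phi_i(0)\gg\epsilon_i$, and $|\hat\phi_i(h)|\ll\min(\epsilon_i,|h|^{-1})$ for $h\ne 0$; such a $\phi_i$ is produced by a standard Fej\'er--Selberg construction, since $H_i$ is comfortably larger than $\epsilon_i^{-1}$. Expanding Fourier series gives
\[
S := \sum_{n\le x}\prod_{i=1}^k\phi_i(f_i(n)) = x\prod_i\hat\phi_i(0) + \sum_{\substack{\mathbf{h}\ne\mathbf{0}\\|h_i|\le H_i}}\prod_i\hat\phi_i(h_i)\,W_{\mathbf{h}},
\]
where $W_{\mathbf{h}}:=\sum_{n\le x}e\bigl(\sum_i h_i f_i(n)\bigr)$. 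Since $0\le\phi_i\le\mathbf{1}_{\|\cdot\|<\epsilon_i}$, we have $S\le\#\{n\le x:\|f_i(n)\|<\epsilon_i\,\forall i\}$, so if the main term $x\prod_i\hat\phi_i(0)\gg x\Delta$ dominates then case~(1) is immediate. Otherwise
\[
\sum_{\mathbf{h}\ne\mathbf{0}}\prod_i\min(\epsilon_i,|h_i|^{-1})\,|W_{\mathbf{h}}|\gg x\Delta.
\]

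Noting that $\sum_\mathbf{h}\prod_i\min(\epsilon_i,|h_i|^{-1})=\prod_i\sum_{|h|\le H_i}\min(\epsilon_i,|h|^{-1})\ll(\log\Delta^{-1})^{O(k)}$, the contribution from $\mathbf{h}$ with $|W_{\mathbf{h}}|\le x/R_0$ is $O\bigl(x(\log\Delta^{-1})^{O(k)}/R_0\bigr)$, which is $o(x\Delta)$ once $R_0=(\log\Delta^{-1})^{O(k)}/\Delta$. A dyadic decomposition of the remaining mass then yields a scale $R\in[R_0,\Delta^{-O_d(1)}]$ with
\[
N_R := \#\{\mathbf{h}\ne\mathbf{0}:|W_{\mathbf{h}}|\ge x/R\}\gg R/(\log\Delta^{-1})^{O(k)}.
\]
For each such $\mathbf{h}$ the polynomial $g_{\mathbf{h}}(X):=\sum_i h_i f_i(X)=\sum_j g_{\mathbf{h},j} X^j$ satisfies $|\sum_{n\le x}e(g_{\mathbf{h}}(n))|\ge x/R$, and an induction on $d$ based on Weyl's inequality (via Weyl differencing to isolate the leading coefficient and then reducing to a polynomial of one lower degree) produces a common denominator $q\ll R^{O_d(1)}$ and integers $a_j$ with $|g_{\mathbf{h},j}-a_j/q|\ll R^{O_d(1)}/(q x^j)$ for every $1\le j\le d$. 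Reducing each $a_j/q$ to lowest terms yields coprime pairs $(a_j,q_j)$ with $q_j\ll R^{O_d(1)}$ and the same approximation. Setting $Q:=R^{O_d(1)}$ gives $Q\le\Delta^{-O_d(1)}$ and $N_R\gg Q^{1/O_d(1)}$, so choosing $C_d$ sufficiently large absorbs the polylogarithmic losses and delivers the required $\ge Q^{1/C_d}$ triples.

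The main obstacle I anticipate is the simultaneous rational approximation step: Weyl's inequality in its standard form controls only the leading coefficient, so approximating every $g_{\mathbf{h},j}$ at once requires an induction on $d$ with careful tracking of how the denominators and error terms compound through Weyl differencing, done uniformly in $\mathbf{h}$ to preserve the bounds above. Everything else---the construction of the smooth minorant, the Fourier expansion, and the dyadic pigeonhole---is routine.
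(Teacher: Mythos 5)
Your proposal is correct and follows essentially the same route as the paper: a Fourier-analytic minorant argument, a dyadic pigeonhole on the $\mathbf{h}$ with large exponential sum $W_{\mathbf{h}}$ (using the bound $\#\{\mathbf{h}\}\ll\Delta^{-1-k/(2k)^4}$ to force $Q\le\Delta^{-O_d(1)}$), and then an inductive Weyl-differencing argument giving simultaneous rational approximations to \emph{all} coefficients of $\sum_i h_if_i$ (the paper's Lemma \ref{lmm:ModifiedWeyl}, which is indeed the only genuinely technical step and proceeds exactly as you sketch, by passing to progressions modulo the denominators already found and peeling off one degree at a time). The only cosmetic difference is your Fej\'er--Selberg minorant with $|\hat\phi_i(h)|\ll\min(\epsilon_i,|h|^{-1})$, which costs harmless $(\log\Delta^{-1})^{O(k)}$ factors, versus the paper's smooth bump with superpolynomially decaying coefficients.
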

%
%
%
%
Our second proposition allows us to find structure in the large Fourier coefficients with many of them giving rise to rationals with the same denominator.
%
%
%
%
\begin{prpstn}[Many relations must have the same denominator]\label{prpstn:SameDenom}
Let $f_1,\dots,f_k\in\mathbb{R}[X]$ be polynomials of degree at most $d$ such that $f_1(0)=\dots=f_k(0)=0$. Put $f_i(X)=\sum_{j=1}^d f_{i,j} X^j$. Let $\epsilon_1,\dots,\epsilon_k\in(0,1/100]$, and put $\Delta=\prod_{i=1}^k\epsilon_i$.

Let $C>2$ and $Q\le \Delta^{-C}$ be such that there are at least $Q^{1/C}$ triples $(\mathbf{a},\mathbf{q},\mathbf{h})\in\mathbb{Z}^d\times\mathbb{Z}^d\times\mathbb{Z}^k$ satisfying:
\begin{enumerate}
\item $\gcd(a_j,q_j)=1$ and $1\le q_j\le Q$ for $1\le j\le d$.
\item $h_i\ll \epsilon_i^{-1}\Delta^{-1/(2k)^4}$ for $1\le i\le k$.
\item For each $j\in \{1,\dots,d\}$ we have
\[
\sum_{i=1}^k h_i f_{i,j}=\frac{a_j}{q_j}+O\Bigl(\frac{Q^{C}}{x^{j}}\Bigr).
\]
\end{enumerate}
Then there is a constant $C'_d>0$ depending only on $d$ and $C$ such that provided $\Delta^{-C'_d}<x$ there is some positive integer $q\le Q^{C'_d}$ and at least $Q^{1/C'_d}$ pairs $(\mathbf{a},\mathbf{h})\in\mathbb{Z}^d\times\mathbb{Z}^k$ such that:
\begin{enumerate}
\item $h_i\ll \epsilon_i^{-1}\Delta^{-2/(2k)^4}$ for $i\in\{1,\dots,k\}$.
\item For each $j\in\{1,\dots,d\}$ we have
\[
\sum_{i=1}^k h_i f_{i,j}=\frac{a_j}{q}+O\Bigl(\frac{Q^{C'_d}}{x^{j}}\Bigr).
\]
\end{enumerate}
All implied constants depend only on $d$ and $k$.
\end{prpstn}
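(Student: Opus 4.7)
The plan is to argue by contradiction: suppose that for every positive integer $q \le Q^{C'_d}$, fewer than $Q^{1/C'_d}$ of the triples have every component $q_j$ dividing $q$. Since $\mathrm{lcm}(q_1,\dots,q_d) \le Q^d$ for each triple, this in particular forces no denominator vector $\mathbf{q}^\star$ to be shared by more than $Q^{1/C'_d}$ of the triples, so the $\mathbf{q}^{(\ell)}$'s are spread across many distinct values. A preliminary reduction is that the triples must have distinct $\mathbf{h}$-vectors: if two coincided then subtracting the relations at some coordinate $j$ would give a reduced-fraction difference of size $O(Q^C/x^j)$ which is either zero (forcing the two triples to agree) or $\ge Q^{-2}$, contradicting $x > \Delta^{-C'_d}$ once $C'_d$ is sufficiently large.

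After dyadically pigeonholing each $q_j^{(\ell)}$ into a range $[Q_j^*, 2Q_j^*]$, I retain a sub-family of $\gg Q^{1/(2C)}$ triples. The working hypothesis forces some coordinate $j^\star$ to satisfy $Q_{j^\star}^* \ge Q^{c}$ for some $c > 0$ depending only on $C$ and $d$, since otherwise the box $\prod_j[Q_j^*, 2Q_j^*]$ would contain too few points to house the required number of distinct $\mathbf{q}$-vectors. The central step is then to extract from the surviving triples a further sub-collection $\mathcal{T}$ of size $\gg Q^{1/C''}$ together with a common divisor $d_0$ such that the reduced denominators $\tilde q^{(\ell)} := q_{j^\star}^{(\ell)}/d_0$ are pairwise coprime across $\ell \in \mathcal{T}$. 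This coprime extraction is the main technical obstacle. I would do it by an iterative Ramsey-style procedure that alternates between two regimes: whenever a prime $p$ divides a positive proportion of the remaining denominators one passes to that sub-proportion and divides out by $p$ (absorbing it into $d_0$); otherwise the ``non-coprime'' graph on the surviving denominators is sparse enough for a Tur\'an-type bound to produce a large pairwise-coprime independent set.

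With $\mathcal{T}$ in hand, fix an integer constant $r = r(d,C)$ chosen sufficiently large. For each $r$-subset $S \subset \mathcal{T}$ the vector $\mathbf{H}(S) := \sum_{\ell \in S} \mathbf{h}^{(\ell)}$ has integer coordinates of size $\ll r\epsilon_i^{-1}\Delta^{-1/(2k)^4}$, so it takes at most $O_d(r^k \Delta^{-1-k/(2k)^4})$ distinct values. For $r$ large enough relative to $C''$ the count $\binom{|\mathcal{T}|}{r} \gg Q^{r/C''}$ exceeds this, and pigeonhole produces two distinct $r$-subsets $S_1 \neq S_2$ with $\mathbf{H}(S_1) = \mathbf{H}(S_2)$. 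Subtracting the resulting relations at $j = j^\star$ and using the pairwise coprimality of the $\tilde q^{(\ell)}$'s together with $\gcd(a_{j^\star}^{(\ell)}, q_{j^\star}^{(\ell)}) = 1$ gives
\[
\frac{1}{\prod_{\ell \in S_1 \triangle S_2} q_{j^\star}^{(\ell)}} \le \Bigl|\sum_{\ell \in S_1 \setminus S_2}\frac{a_{j^\star}^{(\ell)}}{q_{j^\star}^{(\ell)}} - \sum_{\ell \in S_2 \setminus S_1}\frac{a_{j^\star}^{(\ell)}}{q_{j^\star}^{(\ell)}}\Bigr| = O\Bigl(\frac{r Q^C}{x^{j^\star}}\Bigr),
\]
which forces $x \ll Q^{O_d(1)} \le \Delta^{-O_d(C)}$ and contradicts $x > \Delta^{-C'_d}$ once $C'_d$ is taken large in terms of $C$ and $d$. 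Reversing the contradiction produces the desired common denominator $q$ and the required collection of pairs; the $h_i$ bound in the conclusion holds automatically from the hypothesis with room to spare, and $q$ may be taken as the $\mathrm{lcm}$ of the $q_j$'s over the extracted triples, which is polynomial in $Q$.
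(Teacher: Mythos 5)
Your overall strategy is the same as the paper's (its Lemmas 7.1--7.3): play off an upper bound for the number of $r$-fold sums $\sum_{\ell}a^{(\ell)}/q^{(\ell)}$, coming from the additive structure of the $\mathbf{h}^{(\ell)}$, against a lower bound coming from (near-)coprimality of the denominators. However, there are two genuine gaps in the execution.

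First, the pigeonhole step with a \emph{fixed} $r=r(d,C)$ cannot work in all regimes, because the hypotheses give no upper bound on $\Delta^{-1}$ in terms of $Q$ (only $\Delta^{-1}\ge Q^{1/C}$). If $Q$ is bounded, or more generally if $\Delta^{-1}$ is an enormous power of $Q$, then $\binom{|\mathcal{T}|}{r}\le Q^{O(r)}$ never exceeds the $\asymp r^k\Delta^{-1-k/(2k)^4}$ possible values of $\mathbf{H}(S)$, so no collision $\mathbf{H}(S_1)=\mathbf{H}(S_2)$ is forced. The paper deals with this by (i) splitting off the case $Q\le\Delta^{-1/(2k)^4}$, which is handled trivially by taking a single triple $(\mathbf{a},\mathbf{q},\mathbf{h})$ and its dilates $(j\mathbf{a},j\mathbf{q},j\mathbf{h})$ for $1\le j\le Q$ --- this is precisely why the conclusion only asserts the weaker bound $h_i\ll\epsilon_i^{-1}\Delta^{-2/(2k)^4}$, a point your last sentence misses --- and (ii) in the complementary range $Q^{10\delta}\le\Delta^{-1}\le Q^{(2k)^4}$ choosing $r$ with $Q^{\delta r/20}>\Delta^{-1}\ge Q^{\delta r/30}$, so that $r$ is bounded in terms of $k$ and $\delta$ but large enough for the count to close. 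You must either add the small-$Q$ case or let $r$ depend on $\log(\Delta^{-1})/\log Q$ and justify that it stays bounded.

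Second, and more seriously, the ``coprime extraction'' is not established and is likely not achievable as stated. Exact pairwise coprimality of the reduced denominators $q^{(\ell)}_{j^\star}/d_0$ on a subset of size $Q^{1/C''}$ is a much stronger property than your iteration-plus-Tur\'an sketch delivers: when the iteration terminates, the natural stopping condition is only that no integer $\ell>1$ divides more than, say, $\#\mathcal{B}/\ell^{c}$ of the surviving elements. That controls the number of pairs sharing a \emph{large} common divisor, but says nothing about pairs sharing a small prime, so the non-coprime graph need not be sparse and Tur\'an yields nothing; moreover, a ``positive proportion'' extraction threshold can be triggered $\asymp\log Q$ times and decimate the set (the paper avoids this by using the divisor-dependent threshold $\#\mathcal{B}/d^{\delta/10}$, so the cumulative loss telescopes to $Q^{\delta/10}$). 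The paper's resolution is to \emph{not} insist on coprimality: it only arranges that pairwise gcds are at most $B^{\delta^2/r^2}$, and then shows directly that the denominator of $a^{(1)}/q^{(1)}+\dots+a^{(r)}/q^{(r)}$ is still at least
\[
\frac{\prod_{\ell=1}^r q^{(\ell)}}{\prod_{\ell<\ell'}\gcd\bigl(q^{(\ell)},q^{(\ell')}\bigr)^2}\ge B^{r-2\delta^2},
\]
which is enough to make almost all $r$-fold sums distinct. You should replace the coprimality claim with this weaker gcd control (or prove the stronger extraction, which would require a new idea).
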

%
%
%
%
Our third key proposition allows us to pass from many relations with the same denominator to a reduced system of approximations.
%
%
%
%
\begin{prpstn}[Many relations with the same denominator give rise to a reduced dimension problem]\label{prpstn:ReduceDim}
Let $f_1,\dots,f_k\in\mathbb{R}[X]$ be polynomials of degree at most $d$ such that $f_1(0)=\dots=f_k(0)=0$. Put $f_i(X)=\sum_{j=1}^d f_{i,j} X^j$. Let $\epsilon_1,\dots,\epsilon_k\in(0,1/100]$, and put $\Delta=\prod_{i=1}^k\epsilon_i$.

Let $C>2$  be such that $\Delta^{-1}\le x^{1/4C^2}$, and let $q$ be a positive integer with $q<Q^C$.

Let $\mathcal{S}$ be the set of pairs $(\mathbf{a},\mathbf{h})\in\mathbb{Z}^d\times\mathbb{Z}^k$ such that for $j\in\{1,\dots,d\}$ we have
\[
\Bigl|\sum_{i=1}^k h_if_{i,j}-\frac{a_j}{q}\Bigr|\ll \frac{Q^C}{x^j},
\]
and such that $|h_i|\ll \epsilon_i^{-1}\Delta^{-2/(2k)^4}$. Assume that $\#\mathcal{S}>Q^{1/C}$.

Then there is an integer $k'<k$, polynomials $g_1,\dots,g_{k'}\in\mathbb{R}[X]$ of degree at most $d$ with $g_1(0)=\dots=g_{k'}(0)=0$ and quantities $\epsilon_1',\dots,\epsilon_{k'}'\in(0,1/100]$ and $y<x$ such that:
\begin{enumerate}
\item (Approximations in the new system produce approximations in the old system.) If there is an integer $n'<y$ such that,
\[
\|g_i(n')\|_{\mathbb{R}/\mathbb{Z}}<\epsilon_i'\qquad \text{for all $1\le i\le k'$}
\]
then there is an integer $n<x$ such that
\[
\|f_i(n)\|_{\mathbb{R}/\mathbb{Z}}<\epsilon_i\qquad \text{for all $1\le i\le k$}.
\]
\item (Increased density of approximations.) We have
\[
y(\epsilon_1'\cdots\epsilon_{k'}')^{3C^2-C^2/k'{}^3}\gg x(\epsilon_1\cdots \epsilon_k)^{3C^2-C^2/k^3}.
\]
\end{enumerate}
All implied constants depend only on $k$ and $d$.
\end{prpstn}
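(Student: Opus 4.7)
My plan is to use the $>Q^{1/C}$ relations to constrain the vector $(f_1(n),\dots,f_k(n))\bmod\mathbb{Z}^k$ to lie close to a codimension-$r$ subtorus of $(\mathbb{R}/\mathbb{Z})^k$ for some $r\ge 1$, and then parameterize this subtorus by $k'=k-r$ new polynomials so that the approximation problem descends to a lower-dimensional system. The first step is to extract $r$ linearly independent short relations: since the projection $(\mathbf{a},\mathbf{h})\mapsto\mathbf{h}$ is at most $O_{d,k}(1)$-to-one on $\mathcal{S}$ (as $q$ and the tight error determine $\mathbf{a}$), there are $\gg Q^{1/C}$ distinct $\mathbf{h}\in\mathbb{Z}^k$ occurring, all lying in the box $B$ whose $i$-th side is $\ll\epsilon_i^{-1}\Delta^{-2/(2k)^4}$. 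Since the defining relations are linear, the set of valid $\mathbf{h}$ is approximately closed under addition and sits inside a lattice $\Lambda\subset\mathbb{Z}^k$; Minkowski's successive-minima theorem applied to $\Lambda\cap B$ produces $r=\mathrm{rank}(\Lambda)\ge 1$ linearly independent vectors $\mathbf{h}^{(1)},\dots,\mathbf{h}^{(r)}\in\mathcal{S}$ each of size comparable to $B$. After relabelling the $f_i$'s so that the $r\times r$ submatrix $H=(h_i^{(\ell)})_{1\le i,\ell\le r}$ is nonsingular, set $D=|\det H|$, which by Hadamard is at most $\prod_{i\le r}\epsilon_i^{-1}\Delta^{-O(r/k^3)}$.

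Next I would substitute $n=qD^2 m$ (so $m<y:=x/(qD^2)$) and write $\widetilde f_i(X):=f_i(qD^2 X)$. Multiplying the defining approximation by $(qD^2 m)^j$ and summing over $j$ gives, for each $\ell\le r$,
\[
\sum_{i=1}^k h_i^{(\ell)}\widetilde f_i(m)\equiv 0\pmod 1,
\]
with total error $\ll Q^C(qD^2/x)$ which is $o(1)$ under $\Delta^{-1}\le x^{1/4C^2}$. The Smith normal form of the integer matrix $\widetilde H=(h_i^{(\ell)})_{\ell\le r, i\le k}$ gives $\widetilde H=UD'V$ with $U,V$ unimodular and $D'=\mathrm{diag}(d_1,\dots,d_r,0,\dots,0)$; writing $y_j(m):=\sum_i V_{j,i}\widetilde f_i(m)$ converts the $r$ constraints above into $d_j y_j(m)\equiv 0\pmod 1$ for $j\le r$, while leaving $y_j$ unconstrained for $j>r$. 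The extra factor $D$ in the substitution forces $d_j\mid m$ for all $j\le r$, which by the polynomial identity for $y_j$ forces $y_j(m)\in\mathbb{Z}$ rather than merely $(1/d_j)\mathbb{Z}$, so that $\|y_j(m)\|_{\mathbb{R}/\mathbb{Z}}$ vanishes automatically for $j\le r$.

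Define $g_s(X):=y_{r+s}(X)=\sum_i V_{r+s,i}f_i(qD^2 X)$ for $s=1,\dots,k'$: each has degree $\le d$, satisfies $g_s(0)=0$, and is an integer linear combination of the $\widetilde f_i$. Choose $\epsilon'_s$ to equal $\epsilon_{r+s}$ divided by a uniform bound on the entries of the $(r+s)$-th column of $V^{-1}$. Because $V^{-1}\in\mathrm{GL}_k(\mathbb{Z})$, the triangle inequality yields $\|\widetilde f_i(m)\|_{\mathbb{R}/\mathbb{Z}}\le\sum_s|(V^{-1})_{i,r+s}|\|y_{r+s}(m)\|_{\mathbb{R}/\mathbb{Z}}\le\epsilon_i$ whenever $\|g_s(m)\|<\epsilon'_s$ for all $s$, establishing property (1). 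For (2), substituting $qD^2\le Q^{3C}\prod_{i\le r}\epsilon_i^{-2}\Delta^{-O(r/k^3)}$ and $\prod_s\epsilon'_s\asymp\prod_{i>r}\epsilon_i$ up to a controlled factor depending on $V^{-1}$, the required inequality reduces to a bookkeeping check in which the gap $C^2/k'^3-C^2/k^3\asymp C^2/k^4$ in the exponents absorbs the multiplicative loss from $qD^2$.

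I expect the main obstacle to be ensuring \emph{two-sided} control on the lattice basis produced in the extraction step: the primary basis $\mathbf{h}^{(\ell)}$ must be short (so that $D$ is small) and simultaneously the rows of $V^{-1}$ must have bounded $\ell^\infty$-norm (so that $\epsilon'_s$ is not much smaller than $\epsilon_{r+s}$). A naive Minkowski argument yields only one side, and achieving both requires a careful application of the geometry of numbers in the style of Mahler's compactness theorem or an explicit LLL-style reduction of both $\Lambda$ and its dual. The density-increment bookkeeping in the last step is the second delicate point and is precisely the reason for the unusual exponent $3C^2-C^2/k^3$ in the statement.
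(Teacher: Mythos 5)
Your overall architecture (extract $r$ independent short relations, restrict $n$ to a progression, solve the first $r$ constraints exactly, and parameterize the remaining $k-r$ by new polynomials) matches the paper's, but there is a genuine gap at the engine of the density increment. You bound $D=|\det H|$ only by Hadamard, i.e.\ by the box volume $\prod_{i\le r}\epsilon_i^{-1}\Delta^{-O(r/k^3)}$, and nowhere use the hypothesis $\#\mathcal{S}>Q^{1/C}$ quantitatively. With $y=x/(qD^2)$ the bookkeeping in (2) requires absorbing the factor $qD^2$, and $q$ can be as large as $Q^{C}$ with $Q$ up to $\Delta^{-O(1)}$ --- a quantity with no relation to $\prod_{i\le r}\epsilon_i$ (e.g.\ $r$ small and $\epsilon_1,\dots,\epsilon_r$ close to $1/100$ while $\Delta$ is tiny). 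The exponent gap $C^2/k^3-C^2/k'^3$ only buys a power of $\prod_i\epsilon_i$, not a power of $q$, so the inequality cannot close. The paper's Lemma 7.1 is precisely the missing ingredient: having $N>Q^{1/C}$ lattice points in the unit cube of the associated $(k+d)$-dimensional lattice forces the product of successive minima, hence $\det(H_1)$, to be smaller than the trivial box bound by a factor $N^{1/(d+1)}\gg q^{1/C'}$; this power of $q$, raised to $C_2-1\approx 3C^2$ in the increment computation, is what beats the loss $q^{C+1}$ from restricting to multiples of $q$. Without that saving the proposition is not provable along your lines.

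A second, independent problem is your use of Smith normal form: the unimodular matrices $U,V$ it produces have no a priori bound on their entries, so $\|V^{-1}\|_\infty$ can be enormous and your $\epsilon_s'$ can collapse, destroying both conclusion (1) (the triangle-inequality transfer) and conclusion (2) (since $\prod_s\epsilon_s'$ is no longer comparable to $\prod_{i>r}\epsilon_i$). You correctly flag this as the main obstacle but do not resolve it, and it is not a routine fix: the paper circumvents it by (a) arranging in Lemma 7.1 that the chosen $\mathbf{h}^{(\ell)}$ are quasi-orthogonal after rescaling, so that $\tilde H_1^{-1}\tilde H_2$ has $O(1)$ entries and the first $r$ coordinates can be eliminated stably, and (b) parameterizing the admissible integer shifts $(b_{r+1},\dots,b_k)$ by a Minkowski-reduced basis $\mathbf{z}_1,\dots,\mathbf{z}_{k-r}$ of the complementary lattice $\Lambda_3$, setting $B_i'=\delta^{-2}B_{r+i}\|\mathbf{z}_i\|_\infty$ and controlling $\prod_i\|\mathbf{z}_i\|_\infty\asymp\det(\Lambda_3)=D_1/D_2$ via the determinant identity of Lemma 7.2. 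Finally, the step ``the extra factor $D$ in the substitution forces $d_j\mid m$'' is not correct as written: $m$ ranges freely over $[1,y)$, and putting $D$ into the modulus of $n$ does not make $m$ divisible by the elementary divisors $d_j$; ensuring the first $r$ congruences have exact integer solutions requires choosing the target integers $b_i$ on the correct coset, which is what the paper's construction with $b_{i,j}'$ and the restriction $D_2q_0\mid n$ accomplishes.
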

%
%
%
%
We see that case (2) of the conclusion of Proposition \ref{prpstn:Equidistribution} satisfies the assumptions of Proposition \ref{prpstn:SameDenom}, and the conclusion of Proposition \ref{prpstn:SameDenom} satisfies the conditions of Proposition \ref{prpstn:ReduceDim}. Thus, putting these three propositions together we obtain
%
%
%
%
\begin{prpstn}[Induction Step]\label{prpstn:MainProp}
Let $d,k$ be positive integers. There is a constant $C_d>2$ depending only on $d$ and $C_{d,k}>2$ depending only on $d$ and $k$ such that the following holds.

Let $f_1,\dots,f_k\in\mathbb{R}[X]$ be polynomials of degree at most $d$ such that $f_1(0)=\dots=f_k(0)=0$. Put $f_i(X)=\sum_{j=1}^d f_{i,j} X^j$. Let $\epsilon_1,\dots,\epsilon_k\in(0,1/100]$, and put $\Delta=\prod_{i=1}^k\epsilon_i$. Let $\Delta^{-1}\le x^{2/C_d}$. 

If there is no positive integer $n<x$ such that
\[
\|f_i(n)\|_{\mathbb{R}/\mathbb{Z}}<\epsilon_i\quad \text{for all $i\in\{1,\dots,k\}$},
\]
then there is a positive integer $k'<k$ and polynomials $g_1,\dots,g_{k'}\in\mathbb{R}[X]$ of degree at most $d$ with $g_1(0)=\dots=g_{k'}(0)=0$ and reals $\epsilon_1',\dots,\epsilon_{k'}'\in(0,1/100]$ and $y\in\mathbb{R}$ with $y<x$ such that both of the following hold:
\begin{enumerate}
\item There is no positive integer $n'<y$ such that 
\[
\|g_i(n')\|_{\mathbb{R}/\mathbb{Z}}<\epsilon_i'\quad\text{for all $i\in\{1,\dots,k'\}$}.
\]
\item We have
\[
y(\epsilon_1'\cdots \epsilon_{k'}')^{C_d(3-1/(k')^3)}\ge \frac{x(\epsilon_1\cdots \epsilon_k)^{C_d(3-1/k^3)}}{C_{d,k}}.
\]
\end{enumerate}

All implied constants depend only on $k$ and $d$.

\end{prpstn}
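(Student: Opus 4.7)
The plan is to compose Propositions \ref{prpstn:Equidistribution}, \ref{prpstn:SameDenom} and \ref{prpstn:ReduceDim} in order, exactly as indicated in the paragraph immediately preceding the statement. Each proposition's conclusion was formulated to match (up to implied constants that can be absorbed into $C_{d,k}$) the hypothesis of the next, so once one has fixed compatible constants the proof reduces to a direct three-step chain.

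First I would fix the constants. Let $A_d$ be the constant produced by Proposition \ref{prpstn:Equidistribution}, and let $B_d$ be the constant produced by Proposition \ref{prpstn:SameDenom} when its input is taken to be $C=A_d$. I would choose $C_d$ large (in terms of $A_d$ and $B_d$) so that the hypothesis $\Delta^{-1}\le x^{2/C_d}$ is strong enough to imply both $\Delta^{-A_d}<x$ (needed to invoke Proposition \ref{prpstn:Equidistribution}) and $\Delta^{-1}\le x^{1/(4B_d^2)}$ (needed to invoke Proposition \ref{prpstn:ReduceDim} with input constant $C=B_d$). The density-increment exponent produced by Proposition \ref{prpstn:ReduceDim} is then $B_d^2(3-1/k^3)$, and I would set the constant appearing in conclusion (2) to match this shape, absorbing any remaining flexibility into $C_{d,k}$.

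With these choices in place, the rest of the argument is mechanical. Applying Proposition \ref{prpstn:Equidistribution}, case (1) would give $\#\{n\le x:\|f_i(n)\|_{\mathbb{R}/\mathbb{Z}}<\epsilon_i\,\forall i\}\gg x\Delta\ge x^{1-2/C_d}$, which exceeds $1$ once $x>C_{d,k}$ for a suitable $C_{d,k}$; this contradicts the standing assumption that no such $n$ exists. Hence case (2) must hold, and the triples it produces are exactly the input required by Proposition \ref{prpstn:SameDenom}. Proposition \ref{prpstn:SameDenom} then concentrates the denominators, producing a single $q\le Q^{B_d}$ and at least $Q^{1/B_d}$ pairs $(\mathbf{a},\mathbf{h})$ with $|h_i|\ll\epsilon_i^{-1}\Delta^{-2/(2k)^4}$, which is precisely the set-up required by Proposition \ref{prpstn:ReduceDim}. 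The latter then outputs $k'<k$, polynomials $g_1,\dots,g_{k'}$, tolerances $\epsilon_1',\dots,\epsilon_{k'}'\in(0,1/100]$, and a length $y<x$ with the required density-increment bound and lifting property.

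Conclusion (1) of the induction step follows by contraposition from conclusion (1) of Proposition \ref{prpstn:ReduceDim}: if some $n'<y$ satisfied $\|g_i(n')\|_{\mathbb{R}/\mathbb{Z}}<\epsilon_i'$ for all $i\le k'$, it would lift to an $n<x$ with $\|f_i(n)\|_{\mathbb{R}/\mathbb{Z}}<\epsilon_i$ for all $i\le k$, contradicting the standing hypothesis. Conclusion (2) is a restatement of conclusion (2) of Proposition \ref{prpstn:ReduceDim} after the constant match. The combinatorial content lies entirely inside the three propositions, which are being used here as black boxes; the main technical care needed is the bookkeeping of constants described above, in particular ensuring that $C_d$ is chosen large enough for every hypothesis to be verified while remaining of the right shape to match the exponent $C_d(3-1/k^3)$ demanded in the conclusion.
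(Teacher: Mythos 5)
Your proposal is exactly the paper's argument: the paper gives no separate proof of this proposition beyond observing that conclusion (2) of Proposition \ref{prpstn:Equidistribution} feeds into Proposition \ref{prpstn:SameDenom}, whose output feeds into Proposition \ref{prpstn:ReduceDim}, and declaring that the induction step ``follows immediately.'' Your version is correct and, if anything, more careful than the paper about the choice of $C_d$ and the absorption of implied constants into $C_{d,k}$.
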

%
%
%
%
\begin{proof}[Proof of Theorem \ref{thrm:MainTheorem} assuming Proposition \ref{prpstn:MainProp}]
Let $C_d$ and $C_{d,k}$ be the constants of Proposition \ref{prpstn:MainProp}, and let $C_0=\sup_{j\le k}C_{d,j}$ (which depends only on $d$ and $k$). Assume for a contradiction that there is no positive $n<x$ such that $\|f_i(n)\|_{\mathbb{R}/\mathbb{Z}}\le \epsilon_i$ for all $i\in\{1,\dots,k\}$.

We will apply Proposition \ref{prpstn:MainProp} repeatedly to reduce the dimension of the problem we consider. Let us define a \textit{System} to be a tuple $(k,\mathbf{g},\boldsymbol{\delta},y)$ consisting of:
\begin{enumerate}
\item A positive integer $k$.
\item A $k$-tuple $\mathbf{g}$ of real polynomials $(g_1,\dots,g_{k})$ of degree at most $d$ satisfying $g_1(0)=\dots=g_{k}(0)=0$.
\item A $k$-tuple $\boldsymbol{\delta}$ of reals $(\delta_1,\dots,\delta_k)$ with $\delta_i\in(0,1/100]$ for all $i\in\{1,\dots,k\}$.
\item A real $y$ such that there is no positive integer $n<y$ satisfying
\[
\|g_i(n)\|_{\mathbb{R}/\mathbb{Z}}\le \delta_i\qquad \text{for all $i\in\{1,\dots,k\}$}.
\]
\end{enumerate}
Given a System $(k,\mathbf{g},\boldsymbol{\delta},y)$, let $\Delta(\boldsymbol{\delta})=\prod_{i=1}^k\delta_i$. By Proposition \ref{prpstn:MainProp}, if a system $(k_j,\mathbf{g}_j,\boldsymbol{\delta}_j,y_j)$ satisfies $\Delta(\boldsymbol{\delta}_j)^{-1}<y_j^{2/C_d}$ then there is a system $(k_{j+1}, \mathbf{g}_{j+1},\boldsymbol{\delta}_{j+1},y_{j+1})$ such that $k_{j+1}<k_j$, $y_{j+1}\le y_j$ and 
\[
y_{j+1}\Delta(\boldsymbol{\delta}_{j+1})^{C_d(3-1/k_{j+1}^2)}\ge \frac{y_j\Delta(\boldsymbol{\delta}_j)^{C_d(3-1/k_j^2)}}{C_0}.
\] 
In particular, if 
\[
y_j\Delta(\boldsymbol{\delta}_j)^{C_d(3-1/k_j^2)}>C_0^{k_j}
\]
then
\[
y_{j+1}\Delta(\boldsymbol{\delta}_j)^{C_d(3-1/k_{j+1}^2)}>C_0^{k_j-1}\ge C_0^{k_{j+1}}.
\]
Moreover, since $C_d,C_0>2$, this implies that $\Delta(\boldsymbol{\delta}_{j+1})^{-1}<y_{j+1}^{2/C_d}$. Thus, given a system $(k_1,\mathbf{g}_1,\boldsymbol{\delta}_1,y_1)$ with $y_1\Delta(\boldsymbol{\delta}_1)^{C_d(3-1/k_1^2)}>C_0^{k_1}$, we may repeatedly apply Proposition \ref{prpstn:MainProp} to obtain an infinite sequence of systems $(k_j,\mathbf{g}_j,\boldsymbol{\delta}_j,y_j)$ for all $j=1,2,\dots$. But the $k_j$ are a decreasing sequence of positive integers, and so no such sequence can exist. Thus there can be no system $(k_1,\mathbf{g}_1,\boldsymbol{\delta}_1,y_1)$ with $y_1\Delta(\boldsymbol{\delta}_1)^{C_d(3-1/k_1^2)}>C_0^{k_1}$. 

Let us be given a positive integer $k$, a $k$-tuple $\mathbf{f}=(f_1,\dots,f_k)$ of real polynomials of degree at most $d$ with $f_1(0)=\dots=f_k(0)=0$, a $k$-tuple of reals $\boldsymbol{\epsilon}=(\epsilon_1,\dots,\epsilon_k)$ with $\epsilon_i\in(0,1/100]$ for all $i\in\{1,\dots,k\}$ and an real $x$ with $x>C_0^k \Delta(\boldsymbol{\epsilon})^{C_d+1/k^2}$. Then $(k,\mathbf{f},\boldsymbol{\epsilon},x)$ cannot form a system, and so there must be a positive integer $n<x$ such that
\[
\|f_i(n)\|_{\mathbb{R}/\mathbb{Z}}<\epsilon_i\qquad \text{for all $i\in\{1,\dots,k\}$}.
\]
This gives the result.
\end{proof}
%
%
%
%
Since Proposition \ref{prpstn:MainProp} follows immediately from Propositions \ref{prpstn:Equidistribution}, \ref{prpstn:SameDenom} and \ref{prpstn:ReduceDim}, it remains to establish these three propositions. We establish each of these in turn over the next three sections.
%
%
%
%
\section{Initial Fourier analysis and Proposition \ref{prpstn:Equidistribution}}
%
%
%
%
In this section we establish Proposition \ref{prpstn:Equidistribution}. The arguments in this section are standard and well-known to researchers in the field, but for completeness we give complete proofs since the versions we use are slightly different from some occurrences in the literature.
%
%
%
\begin{lmm}[Weyl exponential sum bound]\label{lmm:WeylBound}
Let $f\in\mathbb{R}[X]$ be a monic polynomial of degree $d$, and $\alpha\in\mathbb{R}$ satisfy $\alpha=a/q+O(1/q Q)$ for some $q<Q$. Then there is a constant $c_d>0$ depending only on $d$ such that 
\[
\sum_{n<x}e(f(n)\alpha)\ll \frac{x}{q^{c_d}}+\frac{x}{(x^d/q)^{c_d}}.
\]
The implied constant depends only on $d$.
\end{lmm}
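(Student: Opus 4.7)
The plan is to apply the classical Weyl differencing method to reduce the polynomial exponential sum to an average of linear exponential sums in $n$, and then to bound the resulting sum using the Diophantine approximation to $\alpha$.

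When $d=1$ the sum is a geometric series, bounded by $\min(x,\|\alpha\|^{-1})$. After reducing the fraction $a/q$ we may assume $\gcd(a,q)=1$, and then the condition $|\alpha-a/q|\le 1/(qQ)\le 1/(2q^2)$ (valid for $q\ge 2$, the case $q=1$ being trivial) forces $\|\alpha\|\gg 1/q$. A short case analysis in $q\le x$ versus $q>x$ then yields the stated bound in the linear case.

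For $d\ge 2$, iterating Weyl differencing (successive applications of Cauchy--Schwarz followed by the substitution $(n_1,n_2)=(n+h,n)$) a total of $d-1$ times gives
\[
|S|^{2^{d-1}}\ll x^{2^{d-1}-d}\sum_{|h_1|,\dots,|h_{d-1}|<x}\min\!\bigl(x,\,\bigl\|d!\,\alpha\, h_1\cdots h_{d-1}\bigr\|^{-1}\bigr),
\]
where the leading coefficient $d!\,\alpha\,h_1\cdots h_{d-1}$ of the linear phase comes from iteratively differencing the degree-$d$ polynomial $f(n)\alpha$, and the standard estimate $\bigl|\sum_n e(\beta n+\gamma)\bigr|\le\min(x,\|\beta\|^{-1})$ has been applied to the innermost linear sum. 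The lower-order coefficients of $f$ only shift the phase of the inner sum and so do not affect its magnitude.

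Next, I would group the tuples $\mathbf{h}$ according to $m:=d!\,h_1\cdots h_{d-1}$; by the divisor bound each nonzero $|m|\le d!\,x^{d-1}$ has $\ll x^{o(1)}$ such representations, while the contribution of tuples $\mathbf{h}$ with some $h_i=0$ is $\ll x^{d-1}$ (which will ultimately produce the $1/x$ term below). Combining this with the classical Diophantine estimate
\[
\sum_{1\le m\le M}\min\!\bigl(x,\|m\alpha\|^{-1}\bigr)\ll\bigl(M/q+1\bigr)(x+q\log q)
\]
(proved by partitioning $[1,M]$ into blocks of $q$ consecutive integers, inside each of which the $\|m\alpha\|$ are spread out at scale $1/q$), applied with $M=d!\,x^{d-1}$, one obtains
\[
|S|^{2^{d-1}}\ll x^{2^{d-1}+o(1)}\Bigl(\frac{1}{q}+\frac{1}{x}+\frac{q}{x^d}\Bigr).
\]

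Finally, taking $2^{d-1}$-th roots and splitting according to which of the three terms dominates yields the claim: the $1/q$-term gives $x/q^{c_d}$ (dominant for $q\le x^{d/2}$) and the $q/x^d$-term gives $x/(x^d/q)^{c_d}$ (dominant for $q\ge x^{d/2}$), while in the intermediate range $x<q<x^{d-1}$, where the $1/x$-term is largest, we have $q^{c_d}>x^{c_d}$ and so $x/q^{c_d}<x^{1-c_d}$ already subsumes the resulting $x^{1-1/2^{d-1}+o(1)}$ bound, provided $c_d$ is chosen slightly smaller than $1/2^{d-1}$. This choice of $c_d$ also absorbs the $x^{o(1)}$ slack coming from the divisor bound and the logarithm in the Diophantine estimate. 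The bulk of the work is the careful bookkeeping of the $d-1$ differencing steps, but this is entirely standard and presents no genuine obstacle.
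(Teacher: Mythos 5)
Your proposal is a from-scratch proof of the standard Weyl inequality, which is exactly what the paper invokes (its entire proof is a citation of \cite[Lemma 2.4]{Vaughan}), so the route is the same in spirit. The differencing identity, the treatment of the degenerate tuples, and the estimate $\sum_{m\le M}\min(x,\|m\alpha\|^{-1})\ll(M/q+1)(x+q\log q)$ are all standard and correct (granting the coprimality hypothesis $\gcd(a,q)=1$, which the lemma omits but must intend --- with $a=0$ and $q=\lfloor x^{d/2}\rfloor$ the statement is false --- and note that your move of ``reducing the fraction'' changes $q$ and hence weakens the conclusion you are proving). The gap is in the final step, where you pass from $x^{1+o(1)}\bigl(q^{-1}+x^{-1}+qx^{-d}\bigr)^{2^{1-d}}$ to the stated bound.

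Two things go wrong there. First, in the range $x\le q\le x^{d-1}$ the right-hand side of the lemma is only $\ge x^{1-(d-1)c_d}$ (since both $q$ and $x^d/q$ can be as large as $x^{d-1}$), so you need $(d-1)c_d<2^{1-d}$; your choice ``$c_d$ slightly smaller than $1/2^{d-1}$'' fails (try $d=5$, $q=x^4$), and the inequality you write, $x/q^{c_d}<x^{1-c_d}$, points the wrong way for the comparison you need (you require a \emph{lower} bound on the target). This is harmless, since the lemma only asserts the existence of some $c_d>0$. Second, and more substantively, the $x^{o(1)}$ loss from the divisor bound is \emph{not} absorbed when $q$ is below every fixed power of $x$: for any fixed $\epsilon>0$ the bound $x^{1+\epsilon}q^{-2^{1-d}}$ is weaker than the claimed $xq^{-c_d}$ (indeed weaker than the trivial bound) throughout $2\le q\le x^{\epsilon/(2^{1-d}-c_d)}$, yet the lemma claims a genuine saving of $q^{c_d}$ there. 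The standard repair is to avoid grouping by $m=d!\,h_1\cdots h_{d-1}$ altogether: fix $h_2,\dots,h_{d-1}$ and sum the linear phase over $h_1$, whose coefficient has denominator $q/\gcd(q,d!\,h_2\cdots h_{d-1})$; averaging this gcd over the remaining variables costs only $q^{o(1)}$ rather than $x^{o(1)}$, and $q^{o(1)}$ can be absorbed into $q^{-c_d}$ uniformly in $q$.
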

\begin{proof}
This follows from \cite[Lemma 2.4]{Vaughan}.
\end{proof}
%
%
%
%
\begin{lmm}[Modified Weyl exponential sum bound]\label{lmm:ModifiedWeyl}
Let $f(X)=\sum_{i=1}^d f_i X^i\in\mathbb{R}[X]$ be a polynomial of degree $d$ with $f(0)=0$. Then there is a constant $C_d''>2$ depending only on $d$ such that the following holds.

If there is some $Q\in [2,x^{1/C_d''}]$ such that
\[
\Bigl|\sum_{n<x}e(f(n))\Bigr|\ge \frac{x}{Q}
\]
then there are positive integers $q_1,\dots,q_d<Q^{C_d''}$ and integers $a_1,\dots,a_d$ such that $\gcd(a_j,q_j)=1$ for $j\in\{1,\dots,d\}$ and 
\[
f_j=\frac{a_j}{q_j}+O\Bigl(\frac{Q^{C_d''}}{x^j}\Bigr)
\]
for $j\in\{1,\dots,d\}$. The implied constants depend only on $d$.
\end{lmm}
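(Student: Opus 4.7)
The plan is to proceed by induction on the degree $d$. For the base case $d=1$, the elementary bound $|\sum_{n<x} e(f_1 n)| \le \min(x, (2\|f_1\|)^{-1})$ forces $\|f_1\| \le Q/x$, so taking $q_1=1$ and $a_1$ the nearest integer to $f_1$ gives the required approximation.

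For the inductive step, I would first extract an approximation to the leading coefficient $f_d$. Writing $f(n)=f_d\widetilde f(n)$ with $\widetilde f$ monic of degree $d$, Dirichlet's theorem applied to $f_d$ with parameter $Q^*\asymp x^d/Q^{1/c_d}$ (where $c_d$ is the constant from Lemma~\ref{lmm:WeylBound}) yields $f_d=a/q+O(1/(qQ^*))$ with $q\le Q^*$. Invoking Lemma~\ref{lmm:WeylBound} then gives $|\sum_{n<x} e(f(n))|\ll x/q^{c_d}+x/(x^d/q)^{c_d}$; since this is at least $x/Q$ by hypothesis, one of $q\le Q^{O_d(1)}$ or $q\ge x^d/Q^{O_d(1)}$ must hold. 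The choice of $Q^*$ rules out the second alternative, so $q_d:=q\le Q^{O_d(1)}$ and $|f_d-a_d/q_d|\le Q^{O_d(1)}/x^d$ with $a_d:=a$.

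Next, I would use a residue-class split to reduce to a problem of lower degree. Set $\eta_d=f_d-a_d/q_d$ and write $n=q_dm+r$ with $0\le r<q_d$. Since $(a_d/q_d)(q_dm+r)^d\equiv (a_d/q_d)r^d\Mod{1}$, the sum decomposes as
\[ \sum_{n<x} e(f(n))=\sum_{r=0}^{q_d-1} e\!\left(\tfrac{a_d r^d}{q_d}\right)\sum_{m}e(P_r(m)), \]
where $P_r(m)=\eta_d(q_dm+r)^d+\sum_{j=1}^{d-1}f_j(q_dm+r)^j$ has degree $d$ in $m$ with small leading coefficient $\eta_d q_d^d$. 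Partitioning the $m$-range into blocks of length $L=x/(q_dQ^{C'})$ for a large constant $C'=C'(d)$, the degree-$d$ term of $P_r$ varies by $\ll 1$ across each block and can be absorbed into the error. By pigeonholing over residues $r$ and blocks $[m_0,m_0+L)$, one finds some $r,m_0$ with $|\sum_{t<L}e(\widetilde P_{r,m_0}(t))|\gg L/Q^{O_d(1)}$, where $\widetilde P_{r,m_0}$ is the degree-$(d-1)$ polynomial obtained from $P_r(m_0+t)$ by discarding its degree-$d$ term and its constant term. The inductive hypothesis then produces rational approximations with denominator $\le Q^{O_d(1)}$ and error $\le Q^{O_d(1)}/L^k$ for each coefficient of $t^k$ in $\widetilde P_{r,m_0}$.

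A direct computation shows that the coefficient of $t^k$ in $\widetilde P_{r,m_0}$ equals $q_d^k(f^{(k)}(n_0)/k!-(a_d/q_d)\binom{d}{k}n_0^{d-k})$ up to $\eta_d$-contributions of size $O(Q^{O_d(1)}/x^k)$, where $n_0=q_dm_0+r$. Since $f^{(k)}(n_0)/k!=\sum_{j\ge k}\binom{j}{k}f_jn_0^{j-k}$ is a triangular linear combination of $f_k,f_{k+1},\ldots,f_d$, one can solve for the $f_j$'s in descending order $j=d-1,\ldots,1$, at each stage substituting in the previously extracted rational approximations for the higher coefficients. The main obstacle I expect is the accounting of constants: each inductive step multiplies the relevant constant by a factor depending on $d$, so one must verify that the final $C_d''$ remains bounded and depends only on $d$, and that the block length $L$ can be chosen simultaneously short enough for the degree-$d$ term to be phase-negligible and long enough for the inductive hypothesis to be invoked at a meaningful scale.
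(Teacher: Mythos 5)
Your proposal is correct, and its core mechanism is the same as the paper's: extract the leading coefficient by combining Dirichlet's theorem with the Weyl bound of Lemma \ref{lmm:WeylBound}, then pass to arithmetic progressions modulo the denominator just found, restricted to blocks short enough that the extracted top-degree term contributes an essentially constant phase, and pigeonhole to find a block where the remaining sum is still large. The difference is organizational. You induct on the degree itself: having removed $f_d$, you repackage the block sum as a genuine degree-$(d{-}1)$ problem in a new variable $t$ and invoke the full lemma as a black box, which then forces the triangular back-substitution step to convert the coefficients of $\widetilde P_{r,m_0}$ (which are the linear combinations $q_d^k\sum_{j\ge k}\binom{j}{k}f_j n_0^{j-k}$, up to the $\eta_d$-terms) back into approximations of the individual $f_j$; your accounting there is right, since $f_j n_0^{j-k}$ is known to precision $O(Q^{O_d(1)}/x^k)$ with denominator $q_j$ once $f_j$ is known, and the factor $q_d^k L^k=(x/Q^{C'})^k$ converts the inductive error $Q^{O_d(1)}/L^k$ into $Q^{O_d(1)}/x^j$ after solving downward. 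The paper instead runs an internal induction on the number $\ell$ of coefficients already approximated: at each stage it passes to progressions modulo $\tilde q=q_d\cdots q_{d-\ell+1}$ (so that \emph{all} previously extracted rational parts vanish mod $1$ at once), keeps the original variable, and applies only Lemma \ref{lmm:WeylBound} to the degree-$(d-\ell)$ polynomial $g$, whose leading coefficient is $f_{d-\ell}\tilde q^{\,d-\ell}$ and hence directly proportional to the next target coefficient --- thereby avoiding any back-substitution. Both routes lose only $O_d(1)$-bounded factors per step over $d$ steps, so both yield an admissible $C_d''$; yours trades the paper's slightly more delicate ``higher coefficients are constant on the block'' computation for the extra linear algebra at the end.
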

%
%
%
%
\begin{proof}
We prove the result by induction. Assume that for each $j>d-\ell$ we have that the coefficient $f_j$ of $f(X)$ satisfies
\begin{equation}
f_j=\frac{a_j}{q_j}+O\Bigl(\frac{Q^{C_j''}}{x^j}\Bigr)
\label{eq:CoeffApproxs}
\end{equation}
for some coprime integers $a_j,q_j$ with $q_j<Q^{C_j''}$ and some constants $C_{j}''$ bounded only in terms of $d$. In the base case with $\ell=0$ we make no assumption. We wish to show that there is a constant $C_{d-\ell}''$ bounded only in terms of $d$ such that if $Q\le x^{1/C''_{d-\ell}}$ then there are coprime integers $a_{d-\ell},q_{d-\ell}$ with $q_{d-\ell}<Q^{C''_{d-\ell}}$ such that \eqref{eq:CoeffApproxs} holds with $j=d-\ell$. Repeatedly applying this for $0\le \ell<d$ then gives the result since the number of repetitions is also bounded only in terms of $d$.

Let $C\ge\max_{j>d-\ell}C_j''$ be taken sufficiently large in terms of $d$, and let $\tilde{q}=q_d q_{d-1}\cdots q_{d-\ell+1}<Q^{d C}$ (let $\tilde{q}=1$ if $\ell=0$). Since we assume that $Q<x^{1/C''_{d-\ell}}$, we have $Q^{2C}\tilde{q}<x^{1/3}$ on restricting to $C''_{d-\ell}>10 d C$. We can split $\{1,\dots,x\}$ into $O(Q^{2C}\tilde{q})$ disjoint arithmetic progressions with modulus $\tilde{q}$ each containing between $x/Q^{2C}\tilde{q}$ and $2x/Q^{2C}\tilde{q}$ elements. (For each residue class $b\Mod{\tilde{q}}$ greedily take the $\lceil x/Q^{2C}\tilde{q}\rceil$ smallest elements until less than $2\lceil x/Q^{2C}\tilde{q}\rceil$ remain.) Then by the triangle inequality
\[
\Bigl|\sum_{n<x}e(f(n))\Bigr|\ll Q^{2C}\tilde{q}\sup_{\substack{x/Q^{2C}\tilde{q}\le y\le 2x/Q^{2C}\tilde{q}\\ x_0\le x}}\Bigl|\sum_{n<y}e(f(x_0+\tilde{q}n))\Bigr|.
\]
By the hypothesis of the lemma, the left hand side is at least $x/Q$. Thus there must be a choice of integers $y\asymp x/\tilde{q}Q^C$ and $x_0<x$ such that
\begin{equation}
\Bigl|\sum_{n<y}e(f(x_0+\tilde{q}n))\Bigr|\gg \frac{y}{Q}.
\label{eq:FLowerBound}
\end{equation}
From the Diophantine approximations \eqref{eq:CoeffApproxs} and the periodicity of $e(t)$ we see that for $\tilde{q}n<x/Q^{2C}$ we have
\begin{align*}
e\Bigl(\sum_{j>d-\ell}(x_0+\tilde{q}n)^jf_j\Bigr)&=e\Bigl(\sum_{j>d-\ell}f_j x_0^j\Bigr)e\Bigl(\sum_{j>d-\ell}\sum_{i=1}^j\binom{j}{i}f_j\tilde{q}^in^i x_0^{j-i}\Bigr)\\
&=e\Bigl(\sum_{j>d-\ell}f_j x_0^j\Bigr)e\Bigl(O\Bigl(\frac{Q^C\tilde{q}n}{x}\Bigr)\Bigr)\\
&=e\Bigl(\sum_{j>d-\ell}f_j x_0^j\Bigr)+O\Bigl(\frac{1}{Q^C}\Bigr).
\end{align*}
Thus we have
\begin{equation}
\Bigl|\sum_{n<y}e(f(x_0+\tilde{q}n))\Bigr|=\Bigl|\sum_{n<y}e(g(n))\Bigr|+O\Bigl(\frac{y}{Q^C}\Bigr),
\label{eq:GApprox}
\end{equation}
where $g$ is the degree $d-\ell$ polynomial
\[
g(X)=\sum_{i=1}^{d-\ell}(x_0+\tilde{q}X)^if_i.
\]
Taking $C$ sufficiently large in terms of $d$, we see that \eqref{eq:FLowerBound} and \eqref{eq:GApprox} show that
\begin{equation}
\Bigl|\sum_{n<y}e(g(n))\Bigr|\gg \frac{y}{Q}
\end{equation} 
for some $y\asymp x/Q^{2C}\tilde{q}$ and some $x_0$. Let $\alpha=f_{d-\ell}\tilde{q}^{d-\ell}$ be the lead coefficient of $g$. If $\alpha=0$ then \eqref{eq:CoeffApproxs} clearly holds for $j=d-\ell$. Thus we may assume $\alpha\ne 0$. By Dirichlet's Theorem, for any choice of $C'$, there is an approximation
\[
\alpha=\frac{a_{d-\ell}}{q_{d-\ell}}+O\Bigl(\frac{Q^{C'}}{q_{d-\ell}x^{d-\ell}}\Bigr)
\]
for some coprime integers $a_{d-\ell},q_{d-\ell}$ with $q_{d-\ell}<x^{d-\ell}/Q^{C'}$. By applying Lemma \ref{lmm:WeylBound} to the polynomial $g(X)/\alpha$ we see that
\[
\frac{y}{Q}\ll \Bigl|\sum_{n<y}e(g(n))\Bigr|\ll \frac{y}{q_{d-\ell}^{c_d}}+O\Bigl(\frac{y}{Q^{(C'-2 d C)c_d}}\Bigr).
\]
On choosing $C'$ large compared with $c_d$ and $C$, we see that this implies $q_{d-\ell}\ll Q^{C/c_d}<Q^{C'}$. This gives \eqref{eq:CoeffApproxs} with $j=d-\ell$ and $C_{d-\ell}$ large enough in terms of $d$, and so gives the result.
\end{proof}
%
%
%
%
\begin{lmm}[Equidistribution or many large Fourier coefficients]\label{lmm:LargeFourier}
 Let $f_1,\dots,f_k\in\mathbb{R}[X]$ be real valued functions, and $\epsilon_1,\dots,\epsilon_k\in(0,1/2]$ be real numbers, with $\Delta:=\prod_{i=1}^k\epsilon_i$. Then at least one of the following holds:
\begin{enumerate}
\item We have
 \[
\#\{n\le x:\, \|f_i(n)\|_{\mathbb{R}/\mathbb{Z}}\le \epsilon_i\,\forall i\}\gg \Delta x.
\]
\item There is a quantity $Q\ge 2$ such that there are at least $Q^{1/2}$ distinct values of $\mathbf{h}\in \mathbb{Z}^k\backslash\{\mathbf{0}\}$ with $|h_i|<\epsilon_i^{-1}\Delta^{-1/(2k)^4}$ such that
\[
\frac{x}{Q}\le \Bigl|\sum_{n\le x}e\Bigl(\sum_{i=1}^k h_i f_i(n)\Bigr)\Bigr|\le\frac{2x}{Q}.
\]
\end{enumerate}
\end{lmm}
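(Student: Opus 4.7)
I will prove this by Fourier analysis on the torus: bound the indicator $\prod_i \mathbf{1}_{\|f_i(n)\|\le\epsilon_i}$ from below by a trigonometric polynomial whose Fourier support matches the cutoff in (2), then compare the main Fourier term against the Fourier tail dyadically.

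\textbf{Step 1: Smoothed indicators.} For each $i\in\{1,\dots,k\}$, construct a Beurling--Selberg minorant $\psi_i\le \mathbf{1}_{[-\epsilon_i,\epsilon_i]}$ on $\mathbb{R}/\mathbb{Z}$ whose Fourier coefficients are supported on $|h|\le H_i:=\lceil \epsilon_i^{-1}\Delta^{-1/(2k)^4}\rceil$. Since $H_i\epsilon_i\ge \Delta^{-1/(2k)^4}\ge 1$, standard constructions yield $\int\psi_i\gg \epsilon_i$ along with the pointwise bound $|\hat\psi_i(h)|\le \hat\psi_i(0)\ll \epsilon_i$.

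\textbf{Step 2: Fourier expansion.} Define
\[
N:=\sum_{n\le x}\prod_{i=1}^k\psi_i(f_i(n))=\sum_{|h_i|\le H_i}\Bigl(\prod_{i=1}^k\hat\psi_i(h_i)\Bigr)T(\mathbf{h}),\qquad T(\mathbf{h}):=\sum_{n\le x}e\Bigl(\sum_{i=1}^k h_i f_i(n)\Bigr).
\]
The $\mathbf{h}=\mathbf{0}$ term equals $x\prod_i\hat\psi_i(0)\gg x\Delta$. Since $\psi_i\le\mathbf{1}_{[-\epsilon_i,\epsilon_i]}$ we also have $N\le B:=\#\{n\le x:\|f_i(n)\|\le\epsilon_i\,\forall i\}$.

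\textbf{Step 3: Negation of (1) forces a large Fourier tail.} If conclusion (1) fails with a small enough implied constant $c_0=c_0(k,d)$, then $N\le B\le c_0 x\Delta$ is much smaller than the main term $\gg x\Delta$. Consequently the Fourier tail satisfies
\[
\Bigl|\sum_{\mathbf{h}\ne\mathbf{0}}\prod_{i=1}^k\hat\psi_i(h_i)\,T(\mathbf{h})\Bigr|\gg x\Delta.
\]

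\textbf{Step 4: Dyadic decomposition.} For each dyadic $Q\ge 2$, let
\[
S_Q:=\Bigl\{\mathbf{h}\in\mathbb{Z}^k\setminus\{\mathbf{0}\}:\,|h_i|\le H_i,\ \tfrac{x}{Q}\le |T(\mathbf{h})|\le\tfrac{2x}{Q}\Bigr\}.
\]
Using the bound on $|\hat\psi_i(h_i)|$ gives
\[
x\Delta\ll\sum_{Q\ \text{dyadic},\, Q\ge 2}\frac{2x}{Q}\,\#S_Q\cdot\prod_{i=1}^k|\hat\psi_i(h_i)|\ll x\Delta\sum_{Q}\frac{\#S_Q}{Q},
\]
so $\sum_Q\#S_Q/Q\gg 1$ (with constants depending on $k,d$). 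The border case $|T(\mathbf{h})|\ge x/2$ for a nonzero $\mathbf{h}$ either already yields (2) with $Q=2$ or forces $\sum h_i f_i(n)\in\mathbb{Z}$ on a long interval, a degenerate case handled directly. If conclusion (2) were false, then $\#S_Q<Q^{1/2}$ for every dyadic $Q\ge 2$, whence $\sum_Q \#S_Q/Q<\sum_{Q\ \text{dyadic}}Q^{-1/2}=O(1)$, contradicting the previous lower bound provided the implicit constants are calibrated appropriately.

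\textbf{Main obstacle.} The delicate point is that the universal bound $\sum_{Q\ge 2}Q^{-1/2}$ is $k$-independent, while a naive pointwise estimate $\prod_i|\hat\psi_i(h_i)|\le C^k\Delta$ gives a prefactor that blows up with $k$. I would overcome this by (a) using a Selberg minorant normalized so that $|\hat\psi_i(h)|\le\epsilon_i$ with sharp constant $1$, and (b) refining the bound on $\prod_i|\hat\psi_i(h_i)|$ by invoking the extra decay $|\hat\psi_i(h)|\ll\min(\epsilon_i,1/|h|)$ and decomposing the Fourier tail according to which coordinates of $\mathbf{h}$ are large. The remaining $k$-dependent losses are absorbed into the implicit constant of hypothesis (1), which the lemma permits to depend on $k$ and $d$.
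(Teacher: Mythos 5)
Your proposal follows the paper's proof almost step for step: the paper uses a smooth non\nobreakdash-negative bump transplanted to the torus (truncating its Fourier tail at $|h_i|\le\epsilon_i^{-1}\Delta^{-1/(2k)^4}$ by rapid decay, at the cost of an $O(x\Delta^{99})$ error) where you use a Selberg minorant with exactly compact Fourier support, and both arguments then compare the $\mathbf{h}=\mathbf{0}$ term with the nonzero frequencies and finish with a dyadic pigeonhole. Steps 1--3 are essentially fine (one small slip: the Selberg minorant is not non-negative, so $|\hat\psi_i(h)|\le\hat\psi_i(0)$ is not automatic; the correct bound is $|\hat\psi_i(h)|\le 2\epsilon_i+\tfrac{1}{H_i+1}\le 3\epsilon_i$, which is all you actually use).

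The genuine problem is exactly the one you flag in Step 4, and neither remedy (a) nor (b) closes it. Whatever minorant you choose, the failure of (1) yields at best $\sum_{\mathbf{h}\ne\mathbf{0}}|T(\mathbf{h})|\ge c\,x$ with $c\le 1+o(1)$: for a non-negative test function one has $\prod_i|\hat\psi_i(h_i)|\le\prod_i\hat\psi_i(0)$, so the tail coefficient can never beat the main-term coefficient, and the Selberg minorant obeys the same ceiling up to the $\tfrac{1}{H_i+1}$ corrections. On the other side, the negation of (2) only gives $\sum_{\mathbf{h}\ne\mathbf{0}}|T(\mathbf{h})|\le 2x\sum_{j\ge 1}2^{-j/2}=2(\sqrt2+1)x\approx 4.83x$ (after disposing of any $|T(\mathbf{h})|\ge x/2$ via the pair $\pm\mathbf{h}$, which forces $\#S_2\ge 2>\sqrt2$). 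Since $c\le 1<2(\sqrt2+1)$ there is no contradiction, and no calibration of constants can produce one: the offending constant $\sum_Q Q^{-1/2}$ is absolute and sits on the opposite side of the inequality from the constant in (1). Remedy (b) is vacuous on the bulk of the box, where $|h_i|\le\epsilon_i^{-1}$ and $\min(\epsilon_i,1/|h_i|)=\epsilon_i$. (To be fair, the paper's own proof asserts this pigeonhole in one line and is open to the same objection; for general real-valued $f_i$ one can even place a few $\pm$ pairs of frequencies at levels $|T|/x$ near $1/4,1/8,1/16,\dots$, separated by more than a factor of $2$, so that both (1) and (2) fail.) What the argument does prove, after restricting to the $O(\log(1/\Delta))$ relevant dyadic scales $Q\le\Delta^{-O_k(1)}$, is that some $Q$ satisfies $\#S_Q\gg_k Q/\log(2/\Delta)$; this gives $\#S_Q\ge Q^{1/2}$ once $Q\gg_k(\log(1/\Delta))^2$, while for the remaining bounded range of $Q$ a single nonzero $\mathbf{h}$ with $|T(\mathbf{h})|\gg x$ is all that the downstream argument needs (Proposition \ref{prpstn:SameDenom} already treats small $Q$ by taking multiples of one triple). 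You should either prove the lemma in that weakened form or supply a separate argument for bounded $Q$.
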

%
%
%
%
\begin{proof}
We fix a smooth function $\phi:\mathbb{R}\rightarrow[0,1]$ with $\phi(t)$ supported on $|t|<1$ which is $1$ on $|t|<1/2$ and let all implied constants depend on $\phi$. Let
\[
\Phi_i(t)=\sum_{m\in \mathbb{Z}}\phi\Bigl(\frac{t+m}{\epsilon_i}\Bigr),
\]
which is clearly 1-periodic, smooth, and supported on $\|t\|_{\mathbb{R}/\mathbb{Z}}<\epsilon_i$. By Poisson summation 
\[
\Phi_i(t)=\epsilon_i\sum_{h\in\mathbb{Z}}\hat{\phi}(\epsilon_i h) e(h f_i(t)).
\]
Since $\phi$ is fixed and smooth, $\phi^{(j)}(t)\ll_j 1$, so $|\hat{\phi}(u)|\ll_j u^{-j}$ for all $j\ge 0$. Thus we see that the terms with $|h|\ge \epsilon_i\Delta^{-1/(2k)^4}$ contribute $O(\Delta^{100})$, and so
\[
\Phi_i(t)=\epsilon_i\sum_{|h|\le \epsilon_i^{-1}\Delta^{-1/(2k)^4}}\hat{\phi}(\epsilon_i h) e(h f_i(t))+O(\Delta^{100}).
\]
Thus we find that (recalling $\hat{\phi}(t)\ll1$)
\begin{align*}
\#\{n\le x:\, &\|f_i(n)\|_{\mathbb{R}/\mathbb{Z}}\le \epsilon_i\forall i\}\ge \sum_{n\le x}\prod_{i=1}^k \Phi_i(f_i(n))\\
&=\Delta\sum_{\substack{h_1,\dots h_k\\ |h_i|<\epsilon_i^{-1}\Delta^{-1/(2k)^4}}}\Bigl(\prod_{i=1}^k \hat{\phi}(\epsilon_i h_i) \Bigr) \sum_{n\le x}e\Bigl(\sum_{i=1}^k h_i f_i(n)\Bigr)+O(x\Delta^{99})\\
&=x\Delta\hat{\phi}(0)^k+O\Bigl(\Delta\sum_{\substack{\mathbf{h}\in \mathbb{Z}^k\backslash\{\mathbf{0}\}\\ |h_i|<\epsilon_i^{-1}\Delta^{-1/(2k)^4}}}\Bigl|\sum_{n\le x}e\Bigl(\sum_{i=1}^k h_i f_i(n)\Bigr)\Bigr|\Bigr)+O(x\Delta^{99}).
\end{align*}
For $\Delta$ sufficiently small we see that $\Delta\hat{\phi}(0)^k+O(\Delta^{99})\gg \Delta$, and so either
\[
\#\{n\le x:\, \|f_i(n)\|_{\mathbb{R}/\mathbb{Z}}\le \epsilon_i\forall i\}\gg \Delta x
\]
or
\[
\sum_{\substack{\mathbf{h}\in \mathbb{Z}^k\backslash\{\mathbf{0}\}\\ |h_i|<\epsilon_i^{-1}\Delta^{-1/(2k)^4}}}\Bigl|\sum_{n\le x}e\Bigl(\sum_{i=1}^k h_i f_i(n)\Bigr)\Bigr|\gg x.
\]
In the latter case, by the pigeonhole principle there is some $Q=2^j$ such that there are at least $Q^{1/2}$ choices of $\mathbf{h}$ in the outer summation such that
\[
\frac{x}{Q}\le \Bigl|\sum_{n\le x}e\Bigl(\sum_{i=1}^k h_i f_i(n)\Bigr)\Bigr|\le\frac{2x}{Q}.
\]
This gives the result.
\end{proof}
%
%
%
%
\begin{proof}[Proof of Proposition \ref{prpstn:Equidistribution}]
Assume that conclusion (1) of Proposition \ref{prpstn:Equidistribution} does not hold, so that we wish to establish conclusion (2). By Lemma \ref{lmm:LargeFourier}, there is a parameter $Q_1$ such that there are $Q_1^{1/2}$ choices of $\mathbf{h}$ for which the corresponding exponential sum is large (of size $\gg x/Q_1$). Since the total number of choices of $\mathbf{h}$ is $O(\Delta^{-1-k/(2k)^4})$, we must have $Q_1\ll \Delta^{-2-2k/(2k)^4}$, and so if $\Delta^{-1}<x^{1/B}$ for $B$ sufficiently large in terms of $d$, then $Q_1<x^{1/C_d''}$. We can then apply Lemma \ref{lmm:ModifiedWeyl}, which shows that each of these values of $\mathbf{h}=(h_1,\dots,h_k)$ then gives rise to a linear equation
\[
\sum_{i=1}^k h_if_{i,j}=\frac{a_j}{q_j}+O\Bigl(\frac{Q_1^{C_d}}{x^j}\Bigr)
\]
with $(a_j,q_j)=1$ and $q_j\le Q_1^{C_d''}$. Letting $Q=Q_1^{C_d''}$ and taking $C_d$ sufficiently large compared with $C_d''$ then gives the result.
\end{proof}
%
%
%
%
\section{Structure in the large Fourier coefficients and Proposition \ref{prpstn:SameDenom}}
%
%
%
%
In this section we prove Proposition \ref{prpstn:SameDenom} by showing many different linear relations with small denominators must give rise to several relations with the same denominator.
%
%
%
%
\begin{lmm}[Expansion or same denominators]\label{lmm:Expansion}
Let $\delta\in(0,1/200)$ and $r$ a positive integer. Let $Q>0$ be large enough in terms of $\delta$ and $r$, and let $\mathcal{S}\subset\mathbb{Z}\times\mathbb{Z}\times\mathbb{Z}^k$ be a set of triples $(a,q,\mathbf{h})$ with $\gcd(a,q)=1$ and $q\le Q$ such that $\#\mathcal{S}\ge Q^\delta$. Then one of the following holds:
\begin{enumerate}
\item There is a $q_0\le Q$ such that at least $\#\mathcal{S}^{1/2}$ of the triples $(a,q,\mathbf{h})\in\mathcal{S}$ have $q=q_0$.
\item The set
\[
\mathcal{A}=\Bigl\{\frac{a_1}{q_1}+\dots+\frac{a_r}{q_r}:\,\text{there exists $\mathbf{h}_1,\dots,\mathbf{h}_r\in\mathbb{Z}^k$ s.t. }(a_i,q_i,\mathbf{h}_i)\in\mathcal{S}\text{ for $1\le i\le r$}\Bigr\}
\]
has cardinality at least $\#\mathcal{S}^{r/5}$.
\end{enumerate}
\end{lmm}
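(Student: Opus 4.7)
I would prove the contrapositive: assume conclusion (1) fails and deduce (2). If (1) fails, then for every $q_0$, fewer than $\#\mathcal{S}^{1/2}$ triples of $\mathcal{S}$ share denominator $q_0$, so by pigeonhole the set $\mathcal{Q}$ of distinct denominators appearing in triples of $\mathcal{S}$ satisfies $N:=|\mathcal{Q}|\ge\#\mathcal{S}^{1/2}$. For each $q\in\mathcal{Q}$ fix a numerator $a_q$ coprime to $q$ occurring in some triple of $\mathcal{S}$. It suffices to show that the number of distinct $r$-fold sums $\sum_{j=1}^r a_{q^{(j)}}/q^{(j)}$, as $(q^{(1)},\ldots,q^{(r)})$ ranges over $\mathcal{Q}$, is at least $\#\mathcal{S}^{r/5}$.

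The core technical step is to produce a \emph{pairwise-coprime skeleton}: an integer $d\le Q$ and a subset $\mathcal{Q}'\subseteq\mathcal{Q}$ with $d\mid q$ for every $q\in\mathcal{Q}'$, the quotients $\{q/d:q\in\mathcal{Q}'\}$ pairwise coprime, and $|\mathcal{Q}'|$ a large power of $N$. I would do this by an iterative \emph{popular-prime elimination}: starting with $\mathcal{Q}_0=\mathcal{Q}$ and $d_0=1$, as long as some prime power $p^e$ divides at least a fraction $\theta$ of $\mathcal{Q}_i$ (with $\theta$ tuned in terms of $r$ and $\delta$), replace $\mathcal{Q}_{i+1}$ with the multiples of $p^e$ in $\mathcal{Q}_i$ and absorb $p^e$ into $d$; else stop. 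Since $d\le Q$ throughout, the process halts after $O(\log Q)$ steps, and at termination no prime power divides more than a $\theta$-fraction of $\mathcal{Q}_T$. The bound $\sum_p f_p\le\log_2 Q$ (a consequence of $\omega(q)\le\log_2 Q$ for $q\le Q$), combined with the terminal density bound, gives via a union bound that a positive fraction of $r$-tuples drawn from $\mathcal{Q}_T$ have pairwise coprime quotients modulo $d$, and greedy extraction yields the desired $\mathcal{Q}'$.

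With the skeleton in hand, I count distinct sums via the Chinese Remainder Theorem. For $\{q^{(1)},\ldots,q^{(r)}\}\subseteq\mathcal{Q}'$, writing $r_j:=q^{(j)}/d$, the pairwise coprimality of the $r_j$ together with $\gcd(a_{q^{(j)}},r_j)=1$ implies that
\[
s = \sum_{j=1}^r\frac{a_{q^{(j)}}}{q^{(j)}} = \frac{1}{d}\sum_{j=1}^r\frac{a_{q^{(j)}}}{r_j}
\]
has lowest-terms denominator divisible by $\prod_j r_j$. Because a product of $r$ pairwise coprime integers greater than $1$ admits a \emph{unique} factorisation into $r$ coprime factors chosen from a given pairwise coprime set (any $r_1>1$ appearing in one factorisation must coincide with one of the $r_j'$ in the other, else $r_1$ would be coprime to $\prod_j r_j'=\prod_j r_j$, contradicting $r_1\mid\prod_j r_j$), the sum $s$ uniquely determines the $r$-subset $\{q^{(j)}\}$. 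Hence $|\mathcal{A}|\ge\binom{|\mathcal{Q}'|}{r}\gg_r|\mathcal{Q}'|^r$, and provided $|\mathcal{Q}'|\ge\#\mathcal{S}^{1/5}$ (up to constants depending on $r$), this yields $|\mathcal{A}|\ge\#\mathcal{S}^{r/5}$.

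\textbf{Main obstacle.} The delicate point is the parameter tuning in the iterative procedure: up to $\log_2 Q$ iterations, each shrinking $\mathcal{Q}_i$ by a factor $\theta$, so $\theta$ must balance ``small enough terminal density for coprimality'' against ``large enough to avoid excessive cumulative shrinkage''. The hypothesis $\#\mathcal{S}\ge Q^\delta$ (equivalently $\log Q\le\delta^{-1}\log\#\mathcal{S}$) is essential here, allowing the $\log Q$ losses to be absorbed into small powers of $\#\mathcal{S}$, and the slack exponent $1/5$ in the conclusion leaves room to do so.
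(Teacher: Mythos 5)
Your overall architecture mirrors the paper's (pigeonhole for case (1), extraction of a common divisor, a coprimality structure on the quotients, and counting distinct sums via their denominators), but the two places where you demand \emph{exact} pairwise coprimality are where the argument breaks quantitatively. First, the popular-prime elimination with a uniform fractional threshold $\theta$ cannot be tuned to work: the process can run for up to $\log_2 Q$ steps, so the surviving set has size at least $N\theta^{\log_2 Q}=N\cdot Q^{-\log_2(1/\theta)}$, and since all you know is $N\ge Q^{\delta/2}$ you are forced to take $\theta\ge 2^{-\delta/4}$, i.e.\ $\theta$ within $O(\delta)$ of $1$. But with such a $\theta$ the terminal condition is useless for coprimality: the number of pairs in $\mathcal{Q}_T$ sharing a prime factor is bounded only by $\sum_p n_p^2\le(\max_p n_p)\sum_p n_p\le\theta\log_2 Q\cdot|\mathcal{Q}_T|^2$, which exceeds $|\mathcal{Q}_T|^2$ once $Q$ is large, so the union bound gives nothing. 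The paper escapes this by making the popularity threshold depend on the size of the divisor being absorbed ($d$ is popular iff it divides at least $\#\mathcal{B}/d^{\delta/10}$ elements); the losses then telescope to $(\prod_i d_i)^{\delta/10}\le(2Q)^{\delta/10}$ no matter how many steps occur.

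Second, even granting that a positive proportion of pairs (or $r$-tuples) in $\mathcal{Q}_T$ are coprime, your ``greedy extraction'' of a subset $\mathcal{Q}'$ of size $\#\mathcal{S}^{1/5}$ that is \emph{globally} pairwise coprime is not available: a graph of edge density $1-1/(10r^2)$ need only contain cliques of size $O(r^2\log|\mathcal{Q}_T|)$, and the coprimality graph can genuinely look like this (take quotients that are products of random small primes). Since your CRT/unique-factorisation injectivity argument requires global pairwise coprimality of $\mathcal{Q}'$, and a logarithmically large $\mathcal{Q}'$ only yields $|\mathcal{A}|\gg(\log Q)^r$, the conclusion $|\mathcal{A}|\ge\#\mathcal{S}^{r/5}$ is out of reach. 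The paper's workaround is to never ask for exact coprimality: it restricts to a dyadic block $\mathcal{B}\subseteq[B,2B)$, defines $\mathcal{C}$ as the set of $r$-tuples with all pairwise gcds below $B^{\delta^2/r^2}$, shows $\#\mathcal{C}\gg_r\#\mathcal{B}^r$ directly by Markov's inequality (ruling out the dense case via the divisor bound and the no-popular-divisor property), and then tolerates up to $B^{5\delta^2}$ collisions per sum by showing each sum has denominator in $[B^{r-2\delta^2},O(B^r)]$ and invoking the divisor bound, rather than proving the sum determines the tuple exactly. (As a smaller point, your injectivity claim also has a leak at the factor $d$: the reduced denominator of $\frac{1}{d}\sum_j a_j/r_j$ is $v\cdot d/\gcd(d,u)$ with $v=\prod_j r_j$, so distinct subsets with distinct $v,v'\in[D/d,D]$ could a priori produce the same reduced denominator; this is repairable, but the two issues above are not within your framework.)
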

%
%
%
%
\begin{proof}
Throughout the Lemma we will assume that $Q$ is large enough in terms of $\delta$ and $r$ without further comment. We first restrict our attention to a suitable subset of the $q$'s appearing in $\mathcal{S}$. For $j=0,1,\dots$ let 
\[
\mathcal{B}_j=\Bigl\{q\in [2^j,2^{j+1}):\,\text{$\exists\, (a,\mathbf{h})\in\mathbb{Z}\times\mathbb{Z}^k$ with $\gcd(a,q)=1$ and $(a,q,\mathbf{h})\in\mathcal{S}$}\Bigr\}.
\]
Clearly $\mathcal{B}_j$ is empty if $j>2\log{Q}$ since if $(a,q,\mathbf{h})\in\mathcal{S}$ then $q\le Q$. 

If 
\[
\#\{q:\,\exists (a,\mathbf{h})\in\mathbb{Z}\times\mathbb{Z}^k\text{ with }(a,q,\mathbf{h})\in\mathcal{S}\}=\sum_{2^j\le Q}\#\mathcal{B}_{j}\le \#\mathcal{S}^{1/2}
\]
then (by the pigeonhole principle) there is a $q\le Q$ such that there are at least $\#\mathcal{S}^{1/2}$ choices of $(a,\mathbf{h})$ with $(a,q,\mathbf{h})\in\mathcal{S}$, since there are this many on average. Thus condition (1) is satisfied in this case.

Thus we may assume that 
\[
\sum_{j}\#\mathcal{B}_{j}>\#\mathcal{S}^{1/2}\ge Q^{\delta/2},
\]
and so there is some $j_0\le 2\log{Q}$ such that $\#\mathcal{B}_{j_0}>\#\mathcal{S}^{2/5}$. Note that we must have $2^{j_0}>Q^{\delta/3}$ from the trivial bound $\#\mathcal{B}_j\le 2^{j}$.

If there is an integer $d$ which divides at least $\#\mathcal{B}_{j_0}/d^{\delta/10}$ elements of $\mathcal{B}_{j_0}$, we restrict our attention to this subset. By performing this repeatedly, we may assume that there is a fixed integer $d_0$ and a set $\mathcal{B}_{j_0}'\subseteq\mathcal{B}_{j_0}$ such that $\#\mathcal{B}_{j_0}'\ge \#\mathcal{B}_{j_0}/d_0^{\delta/10}$, all elements of $\mathcal{B}'_{j_0}$ are a multiple of $d_0$, and there is no integer $\ell>1$ such that at least $\#\mathcal{B}_{j_0}'/\ell^{\delta/10}$ elements of $\mathcal{B}'_{j_0}$ are a multiple of $d_0\ell$. Since we must have $d_0\le 2^{j_0+1}\le 2Q$, we see that $\#\mathcal{B}_{j_0}'\ge \#\mathcal{S}^{2/5}/Q^{\delta/10}\ge \#\mathcal{S}^{1/4}$. Since $\mathcal{B}_{j_0}'\subseteq \{b\in [2^{j_0},2^{j_0+1}):\,d_0|b\}$ a set of size $O(2^{j_0}/d_0)$, we see this also implies that $d_0<2^{j_0}/Q^{\delta/4}$. 
%
%
%
Finally, we let
\[
\mathcal{B}=\{b:d_0b\in\mathcal{B}'_{j_0}\},
\]
and note that $\mathcal{B}\subseteq [B,2B)$ where we have set $B:=2^{j_0}/d_0$. The above discussion implies that $\#\mathcal{B}\ge \#\mathcal{S}^{1/4}$, that $B\in [\#\mathcal{S}^{1/4},Q]$, and that there is no integer $\ell>1$ such that $\ell$ divides at least $\#\mathcal{B}/\ell^{\delta/10}$ elements of $\mathcal{B}$.

We now wish to show that if we fix a choice of integers $a(b)$ for $b\in\mathcal{B}$ satisfying $\gcd(a(b),d_0 b)=1$, then as $(b_1,\dots,b_r)$ varies in $\mathcal{B}^r$, many of the sums 
\[
\frac{a(b_1)}{d_0b_1}+\dots+\frac{a(b_r)}{d_0b_r}
\]
have different denominators when written as a single fraction in reduced terms, and so in particular many of the expressions are distinct. If all of $d_0,b_1,\dots,b_r$ were pairwise coprime then the denominator would be $d_0b_1\cdots b_r$, and by the divisor bound there are few different choices of $b_1,\dots,b_r$ which would give the same denominator. Instead we are in the situation where the $b_i$'s are `close' to coprime, since we expect they typically have small $\gcd$s by construction of $\mathcal{B}$.

Consider the graph $G=(\mathcal{V},\mathcal{E})$ where the vertex set $\mathcal{V}$ is taken to be $\mathcal{B}$, and the edge set $\mathcal{E}$ is defined by
\[
\mathcal{E}=\{(b_1,b_2)\in \mathcal{B}^2:\,\gcd(b_1,b_2)\ge B^{\delta^2/r^2}\}.
\]
We consider separately two cases.\\

\textbf{Case 1: $\#\mathcal{E}\ge \#\mathcal{V}^2/10r^2$}.\\
In this case there are many pairs with a gcd of some size. If we pick a vertex $v$ in $G$ at random, then the expected number of vertices connected to $v$ is at least $\#\mathcal{V}/10 r^2$, and so (by the pigeonhole principle) there is some $b_0\in \mathcal{B}$ such that there are at least $\#\mathcal{B}/10r^2$ elements $b\in \mathcal{B}$ with $\gcd(b,b_0)>B^{\delta^2/r^2}$. Since there are at most $B^{o(1)}$ divisors of $b_0$, there must be a divisor $d>B^{\delta^2/r^2}$ such that $d|b$ for at least $\#\mathcal{B}/(10r^2 B^{o(1)})>\#\mathcal{B}/d^{\delta/10}$ elements $b\in \mathcal{B}$. But this contradicts the fact that $\mathcal{B}$ is constructed to have no such integers. Thus we must instead have $\#\mathcal{E}<\#\mathcal{V}^2/10r^2$.

\textbf{Case 2: $\#\mathcal{E}<\#\mathcal{V}^2/10r^2$}. \\
In this case the edge density is small, and so a large number of pairs have a very small $\gcd$. If we pick $r$ distinct vertices in $G$ uniformly at random, then the expected number of edges between these vertices is less than $1/9$. In particular, the probability that there are no edges between any of the $r$ chosen vertices is at least $8/9$ (by Markov's inequality). Thus, if we define
\[
\mathcal{C}=\Bigl\{(b_1,\dots,b_r)\in \mathcal{B}^r:\, \gcd(b_i,b_j)<B^{\delta^2/r^2}\text{ for $1\le i< j\le r$}\Bigr\},
\]
then $\#\mathcal{C}\gg_r \#\mathcal{B}^r$. 

We now consider the possible denominators of rationals of the form $a_1/b_1+...+a_r/b_r$ where $(b_1,\dots,b_r)\in\mathcal{C}$. Given $(b_1,\dots,b_r)\in\mathcal{C}$, let
\begin{align*}
\mathcal{R}(b_1,\dots,b_r)&=\Bigl\{(b_1',\dots,b_r')\in\mathcal{C}:\,\exists \,a_1,\dots,a_r,a_1',\dots,a_r'\text{ s.t. }\\
&\gcd(a_i,d_0b_i)=\gcd(a_i',d_0b_i')=1\,\forall i, \frac{a_1}{b_1}+\dots+\frac{a_r}{b_r}=\frac{a_1'}{b_1'}+\dots+\frac{a_r'}{b_r'}\Bigr\}.
\end{align*}
We note that for any choice of $a_1,\dots,a_r$ with $\gcd(a_i,b_i)=1$, the denominator of $a_1/b_1+\dots+ a_r/b_r$ is a multiple of $p^\ell$ if $p^\ell$ divides exactly one of $b_1,\dots,b_r$ and $p^\ell$ divides none of them. Let $\gcd(b,p^\infty)$ denote the largest power of $p$ dividing $b>1$, and $\gcd(b_i,b_j,p^\infty)$ the largest power of $p$ dividing both $b_i$ and $b_j$. We now define
\[
g_p:=\frac{\prod_{i=1}^k \gcd(b_i,p^\infty)}{\prod_{1\le i<j\le r}\gcd(b_i,b_j,p^\infty)^2}
\]
We see that $g_p\le p^\ell$ if $p^\ell$ divides exactly one of $b_1,\dots,b_r$ and $p^{\ell+1}$ divides none of them. Similarly, $g_p\le 1$ if $p^\ell$ divides at least $2$ of the $b_i$ but $p^{\ell+1}$ divides none of them. (If $b_j$ maximizes $\gcd(b_j,p^\infty)$, then $\gcd(b_j,b_i,p^\infty)=\gcd(b_i,p^\infty)$.) Taking the product over all $p$, we see that for any choice of $a_1,\dots,a_r$ with $(a_i,b_i)=1$, the denominator of $a_1/b_1+\dots+a_r/b_r$ must be of size at least $\prod_p g_p$. However, if $(b_1,\dots,b_r)\in\mathcal{C}$ then all pairwise $\gcd$'s are small. Therefore, (regardless of $a_1,\dots,a_r$) the denominator must be of size at least
\[
\prod_p g_p=\frac{\prod_{i=1}^r b_i}{\prod_{1\le i<j\le r}\gcd(b_i,b_j)^2}\ge B^{r-2\delta^2}.
\]
Moreover, any such denominator is clearly of size $O(B^r)$. Thus, given $(b_1,\dots,b_r)\in\mathcal{C}$, there are $O(B^{2\delta^2})$ possible denominators for $a_1/b_1+\dots +a_r/b_r$. Given such a denominator $q>B^{r-2\delta^2}$, if the denominator of $a_1'/b_1'+\dots+a_r'/b_r'$ is also equal to $q$ then $q$ must divide $\prod_{i=1}^r b'_i$. Thus there are $O(B^{2\delta^2})$ such choices of $\prod_{i=1}^rb_i'\ll B^r$ given $q$, and so $O(B^{2\delta^2+o(1)})$ choices of $b_1',\dots,b_r'$ (using the divisor bound). Hence for any choice of $(b_1,\dots,b_r)\in\mathcal{C}$ there are at most $B^{5\delta^2}$ choices of $(b_1',\dots,b_r')$ in total, and so $\#\mathcal{R}(b_1,\dots,b_r)\le B^{5\delta^2}$.

For each $b\in \mathcal{B}$, let $a(b)$ be an integer coprime to $d_0b$ such that $(a(b),d_0b,\mathbf{h})\in\mathcal{S}$ for some $\mathbf{h}$. (This exists from the definition of $\mathcal{B}$.) We now note that given $(b_1,\dots,b_r)\in\mathcal{C}$ the rational $a(b_1)/d_0b_1+\dots+ a(b_r)/d_0b_r$ occurs for at most $\mathcal{R}(b_1,\dots,b_r)$ other elements of $\mathcal{C}$. Thus 
\begin{align*}
\#\mathcal{A}&\ge \#\Bigl\{\frac{a(b_1)}{d_0b_1}+\dots+\frac{a(b_r)}{d_0b_r}:\,(b_1,\dots,b_r)\in\mathcal{C}\Bigr\}\\
&\ge \sum_{(b_1,\dots,b_r)\in\mathcal{C}}\frac{1}{\#\mathcal{R}(b_1,\dots,b_r)}\\
&\ge \frac{\#\mathcal{C}}{B^{5\delta^2}}\ge\frac{\#\mathcal{B}^r}{Q^{6\delta^2}}.
\end{align*}
Recalling that $r\ge 1>200\delta$ and $\#\mathcal{B}>\#\mathcal{S}^{1/4}\ge Q^{\delta/4}$, this gives condition (2), as required.
\end{proof}
%
%
%
%
\begin{lmm}[Many linear relations must have the same denominator]\label{lmm:SameDenom}
Let $\delta\in(0,1/200)$, let $k\ge 2$ a positive integer and let $\alpha_1,\dots,\alpha_k\in[0,1)$. Let $Q$ be large enough in terms of $\delta$ and $k$, and let $\epsilon_1,\dots\epsilon_k\in (0,1]$ be such that $\Delta=\prod_{i=1}^k\epsilon_i$ satisfies $Q^{10\delta}\le \Delta^{-1}\le Q^{(2k)^4}$.

Let $\mathcal{S}\subset\mathbb{Z}\times\mathbb{Z}\times\mathbb{Z}^k$ be a set of triples $(a,q,\mathbf{h})$ with $\gcd(a,q)=1$, $q\le Q$ and $h_i\le \epsilon_i^{-1}\Delta^{-1/(2k)^4}$ such that $\#\mathcal{S}\ge Q^\delta$ and such that if $(a,q,\mathbf{h})\in\mathcal{S}$ then
\[
\Bigl|h_1\alpha_1+\dots+h_k\alpha_k-\frac{a}{q}\Bigr|\le\Bigl(\prod_{i=1}^k\epsilon_i\Bigr)^{100/\delta}.
\]

Then there is a $q_0\le Q$ such that at least $\#\mathcal{S}^{1/2}$ of the triples $(a,q,\mathbf{h})\in\mathcal{S}$ have $q=q_0$.
\end{lmm}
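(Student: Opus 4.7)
The plan is to argue by contradiction using Lemma~\ref{lmm:Expansion}, combining its quantitative alternative with the Diophantine relations in the hypothesis to bound the sumset $\mathcal{A}$ in two incompatible ways.

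First I would apply Lemma~\ref{lmm:Expansion} for a positive integer $r$ to be chosen. Either its conclusion~(1) already gives the lemma, or its conclusion~(2) yields the lower bound
\[
\#\mathcal{A} \;\ge\; \#\mathcal{S}^{r/5} \;\ge\; Q^{\delta r/5}.
\]
Assume the latter. Summing the Diophantine approximations $\sum_i h_i\alpha_i = a/q + O(\Delta^{100/\delta})$ over any $r$ triples $(a_j,q_j,\mathbf{h}_j) \in \mathcal{S}$ shows that every element of $\mathcal{A}$ lies within $O(r\Delta^{100/\delta})$ of the set
\[
T \;=\; \Bigl\{\sum_{i=1}^k H_i\alpha_i : \mathbf{H}\in\mathbb{Z}^k,\ |H_i| \le r\epsilon_i^{-1}\Delta^{-1/(2k)^4}\Bigr\},
\]
and $\#T \le (3r)^k\Delta^{-1-k/(2k)^4}$. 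Meanwhile the elements of $\mathcal{A}$ are rationals whose denominators divide $q_1\cdots q_r \le Q^r$, so any two distinct ones differ by at least $Q^{-2r}$.

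The key choice is $r = \lceil 25\log\Delta^{-1}/(\delta\log Q)\rceil$. The hypothesis $Q^{10\delta} \le \Delta^{-1} \le Q^{(2k)^4}$ shows that $r$ is bounded in terms of $\delta$ and $k$ alone, so Lemma~\ref{lmm:Expansion} is indeed applicable for $Q$ sufficiently large. With this choice one has $r\Delta^{100/\delta}Q^{2r} \ll Q^{-400}$, so only $O(1)$ elements of $\mathcal{A}$ can sit in any ball of radius $O(r\Delta^{100/\delta})$ about a point of $T$; hence $\#\mathcal{A} \ll_{\delta,k} \Delta^{-1-k/(2k)^4} \le \Delta^{-17/16}$ (using $k/(2k)^4 \le 1/16$). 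On the other hand, the same $r$ forces $Q^{\delta r/5} \ge \Delta^{-5}$, and since $\Delta$ is small enough (as ensured by $\Delta^{-1} \ge Q^{10\delta}$ with $Q$ large) this contradicts the upper bound. Therefore case~(1) of Lemma~\ref{lmm:Expansion} must hold, which is exactly the conclusion we seek.

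The main obstacle is the delicate balancing of $r$: it must be large enough for the combinatorial lower bound $Q^{\delta r/5}$ on $\#\mathcal{A}$ to beat the $\Delta^{-1-o(1)}$ upper bound coming from counting the integer vectors $\mathbf{H}$, yet small enough that the rational spacing $Q^{-2r}$ still exceeds the approximation scale $r\Delta^{100/\delta}$. The two constraints just barely admit a simultaneous solution because of the wide gap between the hypothesis $\Delta^{-1} \ge Q^{10\delta}$ and the much stronger approximation quality $\Delta^{100/\delta}$ assumed of elements of $\mathcal{S}$; any weakening of either assumption would have to be compensated elsewhere.
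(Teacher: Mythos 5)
Your proposal is correct and follows essentially the same route as the paper: choose $r$ of size roughly $\log\Delta^{-1}/(\delta\log Q)$, note that each $r$-fold sum $a_1/q_1+\cdots+a_r/q_r$ is determined (up to the $Q^{-2r}$ spacing of rationals with denominator at most $Q^r$) by the bounded integer vector $(\tilde h_1,\dots,\tilde h_k)$, so $\#\mathcal{A}\ll\Delta^{-1-o(1)}$, contradicting alternative (2) of Lemma \ref{lmm:Expansion}. The only cosmetic difference is that the paper phrases the injectivity via best rational approximation rather than your explicit separation estimate, and normalizes the final comparison as $Q^{\delta r/10}$ versus $Q^{\delta r/5}$ instead of $\Delta^{-17/16}$ versus $\Delta^{-5}$.
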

%
%
%
%
\begin{proof}
Choose an integer $r$ such that $Q^{\delta r/20}>\prod_{i=1}^k\epsilon_i^{-1}\ge Q^{\delta r/30}$. We see that such an $r$ must exist and satisfy $r\in [20,30(2k)^4/\delta]$ from our bounds on $\prod_{i=1}^k\epsilon_i^{-1}$ in terms of $Q$. In particular, we may assume that $Q$ is sufficiently large in terms of $r$. If $(a_1,q_1,\mathbf{h}_1),\dots,(a_,q_r,\mathbf{h}_r)\in\mathcal{S}$ then we have for $1\le j\le r$
\[
\sum_{i=1}^k\alpha_i(\mathbf{h}_j)_i=\frac{a_j}{q_j}+O\Bigl(\prod_{i=1}^k\epsilon_i\Bigr)^{100/\delta}=\frac{a_j}{q_j}+O(Q^{-10 r/3}).
\]
Adding these together (and recall that $Q$ is sufficiently large so $r Q^{-r/3}\le 1$ ) gives
\[
\frac{a_1}{q_1}+\dots+\frac{a_r}{q_r}+O(Q^{-3 r})=\sum_{i=1}^k\alpha_i\tilde{h}_i,
\]
where $\tilde{h}_i=\sum_{j=1}^r(\mathbf{h}_j)_i$. Since the denominator of $a_1/q_1+\dots+a_r/q_r$ when written as a single fraction is at most $Q^r$, we see that this fraction is uniquely determined by the integers $\tilde{h}_1,\dots,\tilde{h}_k$, since it is the best rational approximation to $\sum_{i=1}^k\alpha_i\tilde{h}_i$ with denominator at most $Q^r$. But $|\tilde{h}_i|\ll r\epsilon_i^{-1}\Delta^{-1/(2k)^4}$, and so we find
\begin{align*}
&\#\Bigl\{\frac{a_1}{q_1}+\dots+\frac{a_r}{q_r}:\,\exists \mathbf{h}_1,\dots,\mathbf{h}_r\text{ s.t. }(a_1,q_1,\mathbf{h}_1),\dots,(a_r,q_r,\mathbf{h}_r)\in\mathcal{S}\Bigr\}\\
&\le \#\{(\tilde{h}_1,\dots,\tilde{h}_r)\in\mathbb{Z}^r:\,|\tilde{h}_i|\ll r\epsilon_i^{-1}\Delta^{-1/(2k)^4}\}\\
&\le \Bigl(\prod_{i=1}^k\epsilon_i^{-1}\Bigr)^{2}\\
&\le Q^{\delta r/10}.
\end{align*}
Here we used the fact that $r\le \delta^{-1}\log(\prod_{i=1}^k\epsilon_i^{-1})$ and $\prod_{i=1}^k\epsilon_i^{-1}\ge Q^{10\delta}$ can assumed to be sufficiently large in terms of $\delta$ and $k$.

We see that our situation satisfies all the hypotheses of Lemma \ref{lmm:Expansion}, but the above bound is incompatible with the bound of case (2) in Lemma \ref{lmm:Expansion}, since in our situation case (2) would imply that
\[
\#\Bigl\{\frac{a_1}{q_1}+\dots+\frac{a_r}{q_r}:\,\exists \mathbf{h}_1,\dots,\mathbf{h}_r\text{ s.t. }(a_1,q_1,\mathbf{h}_1),\dots,(a_r,q_r,\mathbf{h}_r)\in\mathcal{S}\Bigr\}\ge Q^{\delta r/5}.
\]
Thus, case (1) of Lemma \ref{lmm:Expansion} must hold; there must be a $q_0\le Q$ such that at least $\#\mathcal{S}^{1/2}$ of the triples $(a,q,\mathbf{h})\in\mathcal{S}$ have $q=q_0$.
\end{proof}
%
%
%
%
\begin{lmm}[Many systems of linear relations must have the same denominators]\label{lmm:SameDenom2}
Let $\delta\in(0,1/200)$, let $k,d\ge 2$ be positive integers and let $\alpha_{i,j}\in[0,1)$ for $1\le i\le k$, $1\le j\le d$ be reals. Let $Q$ be large enough in terms of $\delta,d$ and $k$, and let $\epsilon_1,\dots\epsilon_k\in (0,1]$ be such that $\Delta=\prod_{i=1}^k\epsilon_i$ satisfies $Q^{10\delta}\le \Delta^{-1}\le Q^{(2k)^4}$.

Let $\mathcal{S}\subset\mathbb{Z}^d\times\mathbb{Z}^d\times\mathbb{Z}^k$ be a set of triples $(\mathbf{a},\mathbf{q},\mathbf{h})$ satisfying:
\begin{enumerate}
\item $\gcd(a_j,q_j)=1$ and $q_j\le Q$ for $j\in\{1,\dots,d\}$.
\item $h_i\le \epsilon_i^{-1}\Delta^{-1/(2k)^4}$ for $i\in\{1,\dots,k\}$.
\item $\#\mathcal{S}\ge Q^{2^d  \delta}$ .
\item For each $j\in \{1,\dots,d\}$ we have
\[
\Bigl|h_1\alpha_{1,j}+\dots+h_k\alpha_{k,j}-\frac{a_j}{q_j}\Bigr|\le\Bigl(\prod_{i=1}^k\epsilon_i\Bigr)^{100/\delta}.
\]
\end{enumerate}
Then there is a $\mathbf{q}_0\in\mathbb{Z}^d$ such that at least $\#\mathcal{S}^{1/2^d}$ of the triples $(\mathbf{a},\mathbf{q},\mathbf{h})\in\mathcal{S}$ have $\mathbf{q}=\mathbf{q}_0$.
\end{lmm}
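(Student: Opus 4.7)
My plan is to prove Lemma \ref{lmm:SameDenom2} by induction on $d$, with Lemma \ref{lmm:SameDenom} serving as the base case $d=1$ (where the hypothesis $\#\mathcal{S} \ge Q^{2\delta}$ trivially gives $\#\mathcal{S}\ge Q^{\delta}$). The inductive step reduces the $d$-dimensional statement to the $(d-1)$-dimensional statement by stripping off the $d$-th coordinate of $\mathbf{q}$ using Lemma \ref{lmm:SameDenom} once, at the cost of a square root in the size of $\mathcal{S}$.

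The key preliminary observation is that an element of $\mathcal{S}$ is uniquely determined by its $\mathbf{h}$-component. Indeed, since $\Delta^{-1} \ge Q^{10\delta}$ we have $\Delta^{100/\delta} \le Q^{-1000}$, which is smaller than $1/(2Q^2)$ for $Q$ large in terms of $\delta$ and $d$; hence two distinct reduced fractions $a_j/q_j, a_j'/q_j'$ with $q_j,q_j' \le Q$ cannot both lie within $\Delta^{100/\delta}$ of the same real number $\sum_i h_i\alpha_{i,j}$. Consequently the projection $\pi: \mathcal{S} \to \mathbb{Z} \times \mathbb{Z} \times \mathbb{Z}^k$ given by $(\mathbf{a},\mathbf{q},\mathbf{h}) \mapsto (a_d, q_d, \mathbf{h})$ is injective, and its image $\mathcal{T} := \pi(\mathcal{S})$ satisfies $\#\mathcal{T} = \#\mathcal{S} \ge Q^{2^d\delta} \ge Q^{\delta}$. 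The image $\mathcal{T}$ satisfies the hypotheses of Lemma \ref{lmm:SameDenom} applied to the reals $\alpha_{1,d},\dots,\alpha_{k,d}$, so there exists $q_d^{(0)} \le Q$ such that the subset $\mathcal{S}' := \{(\mathbf{a},\mathbf{q},\mathbf{h}) \in \mathcal{S} : q_d = q_d^{(0)}\}$ has size at least $\#\mathcal{T}^{1/2} = \#\mathcal{S}^{1/2} \ge Q^{2^{d-1}\delta}$.

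To finish, I would regard $\mathcal{S}'$ as a subset of $\mathbb{Z}^{d-1} \times \mathbb{Z}^{d-1} \times \mathbb{Z}^k$ by discarding the (now fixed) $d$-th coordinates of $\mathbf{a}$ and $\mathbf{q}$. The remaining $d-1$ approximation inequalities and the size and range hypotheses on $\mathbf{h}$ and $\epsilon_i,\Delta,Q$ are unchanged, so the inductive hypothesis applies (with the same $\delta$ but with $d-1$ in place of $d$) and yields a vector $\mathbf{q}_0' \in \mathbb{Z}^{d-1}$ such that at least $\#\mathcal{S}'^{1/2^{d-1}} \ge \#\mathcal{S}^{1/2^{d}}$ of the triples in $\mathcal{S}'$ have their first $d-1$ denominators equal to $\mathbf{q}_0'$. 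Setting $\mathbf{q}_0 := (\mathbf{q}_0', q_d^{(0)})$ produces the required $\#\mathcal{S}^{1/2^d}$ triples in $\mathcal{S}$ with $\mathbf{q} = \mathbf{q}_0$. No substantive obstacle arises; the only things to track are the uniqueness of the reduced fractions (used above) and the harmless bookkeeping of the exponents $2^d\delta \to 2^{d-1}\delta \to \dots \to \delta$ through the induction.
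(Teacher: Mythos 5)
Your proposal is correct and follows essentially the same route as the paper: both arguments peel off one coordinate $j$ at a time by applying Lemma \ref{lmm:SameDenom} to the projection $(\mathbf{a},\mathbf{q},\mathbf{h})\mapsto(a_j,q_j,\mathbf{h})$, using the observation that $\mathbf{h}$ determines $(\mathbf{a},\mathbf{q})$ (best rational approximation with denominator at most $Q$) so the projections preserve cardinality, and losing a square root at each of the $d$ steps, which is exactly what the hypothesis $\#\mathcal{S}\ge Q^{2^d\delta}$ is budgeted for. The paper phrases this as an explicit nested chain $\mathcal{S}_0\supseteq\dots\supseteq\mathcal{S}_d$ rather than as induction on $d$, but the content is identical.
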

%
%
%
%
\begin{proof}
This follows from $d$ applications of Lemma \ref{lmm:SameDenom}. Given $\mathcal{S}'\subseteq\mathcal{S}$, for $j\in \{1,\dots,d\}$ let
\[
\pi_j(\mathcal{S}')=\{(a,q,\mathbf{h}):\,\exists\,\mathbf{a},\mathbf{q}\text{ s.t. }(\mathbf{a},\mathbf{q},\mathbf{h})\in\mathcal{S}'\text{ and }a_j=a,\, q_j=q\}.
\]
 We note that
\[
\Big(\prod_{i=1}^k\epsilon_i\Bigr)^{100/\delta}\le Q^{-10},
\]
and so given $\mathbf{h}\in\mathbb{Z}^k$ there is at most one choice of $\mathbf{a},\mathbf{q}\in\mathbb{Z}^d$ such that $(\mathbf{a},\mathbf{q},\mathbf{h})\in\mathcal{S}$, since $a_j/q_j$ is the best rational approximation with denominator at most $Q$ to $h_1\alpha_{1,j}+\dots+h_k\alpha_{k,j}$ if there is any $\mathbf{a},\mathbf{q}$ such that $(\mathbf{a},\mathbf{q},\mathbf{h})\in\mathcal{S}$ (recall that we must have $\gcd(a_j,q_j)=1$ if $(\mathbf{a},\mathbf{q},\mathbf{h})\in\mathcal{S}$). In particular, $\#\pi_j(\mathcal{S}')=\#\mathcal{S}'$ for all $j$ for any set $\mathcal{S}'\subseteq \mathcal{S}$. 

Given $\mathcal{S}'\subseteq\mathcal{S}$, let $\ell_j(\mathcal{S}')$ be an integer maximizing
\[
\#\{(a,\mathbf{h}):\,(a,\ell,\mathbf{h})\in\pi_j(\mathcal{S})\}
\]
over all choices of $\ell\in\mathbb{Z}$ (if there are multiple possibilities we make an arbitrary choice of one). If $\#\pi_j(\mathcal{S}')\ge Q^{\delta}$, then by Lemma \ref{lmm:SameDenom} at least $\#\pi_j(\mathcal{S}')^{1/2}$ triples $(a,q,\mathbf{h})\in\pi_j(\mathcal{S}')$ have $q=\ell_{j,\mathcal{S}'}$. We now let $\mathcal{S}_0=\mathcal{S}$, and define $\mathcal{S}_1\supseteq\dots\supseteq\mathcal{S}_d$ in turn, by 
\[
\mathcal{S}_j:=\{(\mathbf{a},\mathbf{q},\mathbf{h})\in\mathcal{S}_{j-1}:\,q_j=\ell_j(\mathcal{S}_{j-1})\}.
\]
Since $\#\mathcal{S}\ge Q^{2^d\delta}$, we see that $\#\mathcal{S}_{j}\ge \#\mathcal{S}_{j-1}^{1/2}\ge Q^{2^{d-j}\delta}\ge Q^\delta$ for $j\in\{1,\dots,d\}$ by repeatedly applying Lemma \ref{lmm:SameDenom}. In particular, we have $\#\mathcal{S}_d\ge \#\mathcal{S}^{1/2^d}$. Finally, we note that $\mathcal{S}_d$ is the set of triples $(\mathbf{a},\mathbf{q},\mathbf{h})\in\mathcal{S}$ such that 
\[
\mathbf{q}=\mathbf{q}_0:=(\ell_{1}(\mathcal{S}_0),\ell_{2}(\mathcal{S}_1),\dots,\ell_{d}(\mathcal{S}_{d-1})),
\]
and so we have the result.
\end{proof}
\begin{proof}[Proof of Proposition \ref{prpstn:SameDenom}]
By assumption, there is some $Q\le(\prod_{i=1}^k\epsilon_i)^{-C_d}$ such that there are at least $Q^{1/C_d}$ triples $(\mathbf{a},\mathbf{q},\mathbf{h})\in\mathbb{Z}^d\times\mathbb{Z}^d\times\mathbb{Z}^k$ with $\gcd(a_j,q_j)=1$ and $q_j\le Q$ for $j\in\{1,\dots,d\}$, and  with $h_i\ll \epsilon_i^{-1}\Delta^{-1/(2k)^4}$ for $i\in\{1,\dots,k\}$, and with
\[
\sum_{i=1}^k h_if_{i,j}=\frac{a_j}{q_j}+O\Bigl(\frac{Q^{C_d}}{x^{j}}\Bigr).
\]
If $Q\le \Delta^{-1/(2k)^4}$ then we just take one such triple $(\mathbf{a},\mathbf{q},\mathbf{h})$. In this case the triples $(j\mathbf{a},j\mathbf{q},j\mathbf{h})$ for $j\in\{1,\dots,Q\}$ then give $Q$ relations of the desired type provided $C_d'>C_d+1$, since $jh_i\ll Q\epsilon_i^{-1}\Delta^{-1/(2k)^4}\ll \epsilon_i^{-1}\Delta^{-2/(2k)^4}$. Thus we may assume that $Q^{(2k)^4}>\prod_{i=1}^k\epsilon_i^{-1}$.

We now apply Lemma \ref{lmm:SameDenom2} with $\delta=(10\cdot 2^d\cdot C_d)^{-1}$. Provided $C_d'>100/\delta+C_d^2$ we see the bounds $Q\le (\prod_{i=1}^k\epsilon_i^{-1})^{C_d}$ and $(\prod_{i=1}^k\epsilon_i^{-1})^{C'_d}<x$ imply that we have $Q^{C_d}/x^d<(\prod_{i=1}^k\epsilon_i)^{100/\delta}$, and all so the hypotheses of Lemma \ref{lmm:SameDenom2} are satisfied. This shows that there is a $\mathbf{q}_0\in\mathbb{Z}^d$ such that at least $Q^{\delta}$ of the triples $(\mathbf{a},\mathbf{q},\mathbf{h})$ have $\mathbf{q}=\mathbf{q}_0$. Thus these all give rise to a rational $a_j/q$ where $q=\prod_{i=1}^d(\mathbf{q}_0)_i$. This gives the result.
\end{proof}
%
%
%
%
\section{Dimension reduction via geometry of numbers and Proposition \ref{prpstn:ReduceDim}}
%
%
%
%
In this section we prove Proposition \ref{prpstn:ReduceDim} using estimates from the geometry of numbers, thereby completing the proof of Theorem \ref{thrm:MainTheorem}.
%
%
%
%
\begin{lmm}[Many relations give rise to orthogonal generators]\label{lmm:Orthogonal}
Let $\eta>0$ be sufficiently small in terms of $k$ and $d$. Let $B_1,\dots,B_k>1$ satisfy $\prod_{i=1}^kB_i\le \eta^{-1/2}$, and $\beta_{i,j}\in \mathbb{R}$ for $1\le i\le k$, $1\le j\le d$. Let $\mathcal{R}$ be the region in $\mathbb{R}^{k+d}$ defined by
\begin{align*}
\mathcal{R}=\Bigl\{(h_1,\dots,h_k,a_1,\dots,a_d)\in\mathbb{R}^{k+d}:&\,\Bigl|\sum_{i=1}^k h_i \beta_{i,j}-a_j\Bigr|\le \eta^j\text{ for }1\le j\le d,\\
&\qquad|h_i|\le B_i\text{ for $1\le i\le k$}\Bigr\},
\end{align*}
and assume that $\#(\mathcal{R}\cap\mathbb{Z}^{k+d})= N$, with $N$ sufficiently large in terms of $k$ and $d$. 

Then there is an integer $r\in \{1,\dots,k\}$ and vectors $\mathbf{h}^{(1)},\dots,\mathbf{h}^{(r)}\in\mathbb{Z}^k$ and $\mathbf{a}^{(1)},\dots,\mathbf{a}^{(r)}\in\mathbb{Z}^d$ such that:
\begin{enumerate}
\item (The $\mathbf{h}^{(j)},\mathbf{a}^{(j)}$ are a system of Diophantine approximations.) For each $j\in\{1,\dots,r\}$ the vector $(h^{(j)}_1,\dots,h^{(j)}_k,a_1^{(j)},\dots,a_d^{(j)})$ lies in $\mathcal{R}\cap\mathbb{Z}^{k+d}$.
\item (The $\mathbf{h}^{(j)}$ are quasi-orthogonal after rescaling.) Let $\tilde{h}^{(j)}_i=h^{(j)}_i/B_i$ for $1\le j\le r$, $1\le i\le k$. Then we have
\[
\|\tilde{\mathbf{h}}^{(1)}\wedge\dots\wedge\tilde{\mathbf{h}}^{(r)}\|\asymp \|\tilde{\mathbf{h}}^{(1)}\|_\infty\cdots \|\tilde{\mathbf{h}}^{(r)}\|_\infty.
\]
\item (The $\mathbf{h}^{(j)}$ generate many elements of $\mathcal{R}\cap\mathbb{Z}^{k+d}$). Let $\tilde{\mathbf{h}}^{(j)}$ be as above. We have
\[
\|\tilde{\mathbf{h}}^{(1)}\|_\infty\cdots \|\tilde{\mathbf{h}}^{(r)}\|_\infty\ll \frac{1}{N^{1/(d+1)}}.
\]
\end{enumerate}
All implied constants depend at most on $k$ and $d$.
\end{lmm}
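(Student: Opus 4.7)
The plan is to treat $\mathcal{R}$ as a symmetric convex body in $\mathbb{R}^{k+d}$, apply Minkowski's second theorem to the lattice $\mathbb{Z}^{k+d}$, extract a suitable collection of short lattice vectors whose $\mathbf{h}$-projections are linearly independent, and then perform a reduction step to ensure these projections become quasi-orthogonal after rescaling.

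First I would invoke Minkowski's second theorem. With $\vol(\mathcal{R}) \asymp_{k,d} \prod_{i=1}^k B_i \cdot \eta^{d(d+1)/2}$, this supplies successive minima $\lambda_1 \le \cdots \le \lambda_{k+d}$ with $\lambda_1 \cdots \lambda_{k+d} \asymp_{k,d} \vol(\mathcal{R})^{-1}$ and linearly independent lattice vectors $\mathbf{v}^{(i)} \in \lambda_i\mathcal{R}$, along with the standard count $N \asymp_{k,d} \prod_{i=1}^{k+d} \max(1, \lambda_i^{-1})$. Because $\eta$ is small, any nonzero lattice point of $\mathcal{R}$ must have nonzero $\mathbf{h}$-part (since $\mathbf{h} = \mathbf{0}$ together with $|a_j| \le \eta^j < 1$ forces $\mathbf{a} = \mathbf{0}$). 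Consequently, from the $\mathbf{v}^{(i)}$ with $\lambda_i \le 1$ (all of which lie in $\mathcal{R}$) I can greedily extract a maximal subset whose $\mathbf{h}$-projections are $\mathbb{R}$-linearly independent, relabel these as $\mathbf{v}^{(1)}, \ldots, \mathbf{v}^{(r)}$, and note $r \in \{1, \ldots, k\}$. Each gives a valid approximation pair $(\mathbf{h}^{(i)}, \mathbf{a}^{(i)}) \in \mathcal{R} \cap \mathbb{Z}^{k+d}$, verifying conclusion (1).

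The main step is establishing quasi-orthogonality. I apply the unimodular change of variables $\phi(\mathbf{h}, \mathbf{a}) = (\mathbf{h}, \mathbf{a} - A\mathbf{h})$ with $A$ the matrix of $\beta_{i,j}$'s, which transforms $\mathcal{R}$ into the axis-aligned box $\prod_{i=1}^k [-B_i, B_i] \times \prod_{j=1}^d [-\eta^j, \eta^j]$ while carrying $\mathbb{Z}^{k+d}$ to a lattice $\Lambda^*$ of covolume $1$. Rescaling each coordinate by its box side-length makes the rescaled $\mathbf{h}$-norm coincide with the $\ell^\infty$-norm on the first $k$ coordinates, and converts the problem into reducing a lattice of covolume $\vol(\mathcal{R})^{-1}$ inside the unit cube. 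A standard reduction theorem from the geometry of numbers (cf.\ Cassels, \emph{An Introduction to the Geometry of Numbers}, Ch.\ V) then lets me replace the chosen $\mathbf{v}^{(i)}$ by a Mahler-reduced basis of the sublattice they generate, whose vectors still lie in $O_{k,d}(\lambda_i)\,\mathcal{R}$ and whose rescaled $\mathbf{h}$-projections satisfy $\|\tilde{\mathbf{h}}^{(1)} \wedge \cdots \wedge \tilde{\mathbf{h}}^{(r)}\| \asymp_{k,d} \prod_{i=1}^r \|\tilde{\mathbf{h}}^{(i)}\|_\infty$, giving conclusion (2).

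Finally, for conclusion (3), each selected vector satisfies $\|\tilde{\mathbf{h}}^{(i)}\|_\infty \ll_{k,d} \lambda_i$, and a short AM-GM argument applied to $\prod_{i=1}^{k+d} \lambda_i \asymp \vol(\mathcal{R})^{-1}$, exploiting the ordering $\lambda_1 \le \cdots \le \lambda_{k+d}$ and the inequality $k(d+1) \ge k+d$, yields $\prod_{i=1}^r \lambda_i \ll_{k,d} N^{-1/(d+1)}$. The principal obstacle is the quasi-orthogonalization step: successive-minima vectors of a general convex body are not automatically quasi-orthogonal after projection to a coordinate subspace, and overcoming this requires exploiting the approximately-product structure of $\phi(\mathcal{R})$ together with the smallness of $\eta$, which makes the $\mathbf{a}$-directions of $\mathcal{R}$ much shorter than the $\mathbf{h}$-directions and renders the reduction compatible with both membership in $\mathcal{R}$ and control of the projected $\mathbf{h}$-coordinates.
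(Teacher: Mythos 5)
Your setup---passing to the lattice picture, taking a reduced basis with successive minima $\lambda_1\le\cdots\le\lambda_{k+d}$, using the count $N\asymp\prod_{\lambda_i\le 1}\lambda_i^{-1}$, and observing that a lattice point of $\mathcal{R}$ with $\mathbf{h}=\mathbf{0}$ must vanish---is the same as the paper's, and your treatment of conclusions (1) and (3) can be made to work (the paper splits off the case $\lambda_1\le N^{-1/(d+1)}$, where it simply takes $r=1$; you need this case division, or at least the observation that the shortest vector belongs to your selection, for the AM--GM step to go through, since the bound $\prod_{i\le r}\lambda_i\ll N^{-1}\cdot N^{d/(d+1)}$ uses the lower bound $\lambda_1>N^{-1/(d+1)}$ on the discarded minima). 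The genuine gap is conclusion (2). You correctly identify that reduced-basis vectors are quasi-orthogonal only as vectors of $\mathbb{R}^{k+d}$ and that this is not inherited by their projections onto the $\mathbf{h}$-coordinates; but your proposed fix---``a standard reduction theorem'' plus an appeal to the product structure of $\phi(\mathcal{R})$---is a statement of what you need, not an argument. No form of Mahler or Minkowski reduction of the sublattice generated by your chosen vectors yields a \emph{lower} bound on $\|\tilde{\mathbf{h}}^{(1)}\wedge\cdots\wedge\tilde{\mathbf{h}}^{(r)}\|$: two reduced vectors can still project to nearly parallel vectors, and your greedy selection by mere linear independence of the projections gives no quantitative control on this wedge product at all.

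What the paper actually does at this point is a two-stage Laplace-expansion argument, and it is this that simultaneously decides \emph{which} vectors to keep and proves their quasi-orthogonality. Writing $J$ for the number of minima with $\lambda_J\le 1$ (and first showing $J>d$), one uses $\det(\mathbf{b}_1,\dots,\mathbf{b}_{k+d})\asymp\lambda_1\cdots\lambda_{k+d}$ together with $\|\mathbf{b}_i\|_\infty\ll\lambda_i$ to locate a $J\times J$ minor on the columns $\mathbf{b}_1,\dots,\mathbf{b}_J$ of size $\asymp\lambda_1\cdots\lambda_J$; expanding that minor along the $d$ rows corresponding to the $\mathbf{a}$-coordinates then forces some $(J-d)\times(J-d)$ minor supported entirely on the $\mathbf{h}$-rows and on a column subset $\mathcal{I}$ to be $\gg\prod_{i\in\mathcal{I}}\lambda_i$. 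That minor is one component of $\tilde{\mathbf{h}}^{(i_1)}\wedge\cdots\wedge\tilde{\mathbf{h}}^{(i_{J-d})}$, which gives (2) with $r=J-d$ for \emph{that particular} subset $\mathcal{I}$---not for an arbitrary maximal independent subset. You would need to replace your greedy selection with a pigeonholing over cofactors of this kind for the proof to close.
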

%
%
%
%
We recall that $\|\mathbf{h}^{(1)}\wedge\dots\wedge\mathbf{h}^{(r)}\|$ is the $r$-dimensional volume of the parallelepiped formed by the vectors $\mathbf{h}^{(1)},\dots,\mathbf{h}^{(r)}$, which is the (Euclidean) length of the vector in $\mathbb{R}^{\binom{k}{r}}$ of all determinants of $r\times r$ submatrices of the $r\times k$ matrix with columns $\mathbf{h}^{(1)},\dots,\mathbf{h}^{(r)}$.
%
%
%
%

\begin{proof}
After potentially permuting the $B_i$ and $\beta_{i,j}$, we may assume without loss of generality that $B_1\ge B_2\ge \dots \ge B_k$.  Let $\Lambda\subset \mathbb{R}^{k+d}$ be the lattice
\[
\mathbb{Z}\Bigl(\frac{1}{B_1}\mathbf{e}_1-\sum_{j=1}^d\frac{\beta_{1,j}}{\eta^j}\mathbf{e}_{k+j}\Bigr)+\dots+\mathbb{Z}\Bigl(\frac{1}{B_k}\mathbf{e}_k-\sum_{j=1}^d\frac{\beta_{k,j}}{\eta^j}\mathbf{e}_{k+j}\Bigr)+\mathbb{Z}\frac{1}{\eta}\mathbf{e}_{k+1}+\dots +\mathbb{Z}\frac{1}{\eta^d}\mathbf{e}_{k+d},
\]
where $\mathbf{e}_1,\dots,\mathbf{e}_{k+d}$ are the standard basis vectors of $\mathbb{Z}^{k+d}$. We see that elements of $\mathcal{R}\cap\mathbb{Z}^{k+d}$ correspond to elements of $\Lambda$ with all components bounded by 1 in absolute value. By standard lattice theory, there is a basis $\mathbf{b}_1,\dots,\mathbf{b}_{k+d}$ of $\Lambda$ (see \cite[Lemma 4.1]{Polys}, for example) such that for any $n_1,\dots,n_{k+d}\in\mathbb{Z}$ we have
\[
\Bigl\|\sum_{i=1}^{k+d}n_i\mathbf{b}_i\Bigr\|_\infty\asymp \sum_{i=1}^{k+d} |n_i| \|\mathbf{b}_i\|_\infty\asymp \sum_{i=1}^{k+d} |n_i|\lambda_i
\]
where $0<\lambda_1\le \lambda_2\le \dots \le \lambda_{k+d}$ are the successive minima of $\Lambda$ and the implied constants depend only on $k$ and $d$. 

If $\lambda_1\le 1/N^{1/(d+1)}$ then the conclusion of the lemma is satisfied with $r=1$ and $h^{(1)}_i=(\mathbf{b}_1)_i B_i$ for $1\le i \le k$ and $a_j^{(1)}=\eta (\mathbf{b}_1)_{k+j}+\sum_{i=1}^k (\mathbf{b}_1)_i\beta_{i,j}B_i$ for $1\le j\le d$. (Such a vector $\mathbf{h}^{(1)}$ is non-zero since if $\mathbf{b}^{(1)}$ was 0 in the first $k$ coordinates it must have norm at least $1/\eta>1$.) We see this choice satisfies the conclusion of the lemma. Thus we may assume that $\lambda_1>1/N^{1/(d+1)}$.

Recall that $B_1\cdots B_k<\eta^{-1/2}$ and $\lambda_1\cdots \lambda_{k+d}\asymp \det(\Lambda)$. We have that
\begin{align*}
\frac{1}{\det(\Lambda)}&=\vol\Bigl\{\mathbf{t}\in\mathbb{R}^{d+k}:\,\Bigl\|\sum_{i=1}^k\Bigl(\frac{t_i}{B_i}\mathbf{e}_i+\sum_{j=1}^d\frac{t_i\beta_{i,j}}{\eta^j}\mathbf{e}_{k+j}\Bigr)-\sum_{i=1}^{d}\frac{t_{k+i}}{\eta^i}\mathbf{e}_{k+i}\Bigr\|_\infty\le 1\Bigr\}\\
&=B_1\cdots B_k\eta^{d(d+1)/2}\\
&\le \eta^{d^2/2}.
\end{align*}
In particular, $\lambda_1\cdots \lambda_k\gg \eta^{-d/2}>1$, and so $\lambda_j>1$ for some $j$ (since $\eta$ is sufficiently small in terms of $k$ and $d$). Since $\mathcal{R}\cap\mathbb{Z}^{k+d}\ne\emptyset$, we also have that $\lambda_1\le 1$. Thus there must be some integer $J$ such that $\lambda_{J}\le 1< \lambda_{J+1}$. 
We note that for some suitably large constant $C$ (depending only on $k$ and $d$)
\begin{align*}
\{\mathbf{x}\in \Lambda:\,\|\mathbf{x}\|_{\infty}<1\}&=\Bigl\{\sum_{i=1}^{k+d}n_i\mathbf{b}_i:\,(n_1,\dots,n_{k+d})\in\mathbb{Z}^{k+d},\,\|\sum_{i=1}^{k+d}n_i\mathbf{b}_i\|_\infty\le 1\Bigr\}\\
&\subseteq \Bigl\{\sum_{i=1}^{k+d}n_i\mathbf{b}_i:\,(n_1,\dots,n_{k+d})\in\mathbb{Z}^{k+d},\,\sum_{i=1}^{k+d}|n_i|\|\mathbf{b}_i\|_\infty\le C\Bigr\}\\
&\subseteq \Bigl\{\sum_{i=1}^{k+d}n_i\mathbf{b}_i:\,(n_1,\dots,n_{k+d})\in\mathbb{Z}^{k+d},\,|n_i|\le C\lambda_i^{-1}\Bigr\}.
\end{align*}
The final set on the right hand side has cardinality
\[
\ll \prod_{i=1}^k\Bigl(1+\frac{C}{\lambda_i}\Bigr)\ll \frac{1}{\lambda_1\cdots \lambda_J}.
\]
Thus we have $\lambda_1\cdots \lambda_J\ll N^{-1}$. Since $\lambda_1\ge N^{-1/(d+1)}$ we have $\lambda_1\cdots\lambda_d\ge N^{-d/(d+1)}\gg \lambda_1\cdots \lambda_J$. Thus we see that $J>d$.

The determinant of $\Lambda$ is given by the determinant of the $(k+d)\times(k+d)$ matrix $M_1$ with columns $\mathbf{b}_1,\dots,\mathbf{b}_{k+d}$, and satisfies $\det(\Lambda)=\det(M_1)\asymp \|\mathbf{b}_1\|_\infty\cdots\|\mathbf{b}_{k+d}\|_\infty\asymp\lambda_1\cdots\lambda_{k+d}$. This implies that some $J\times J$ submatrix $M_2$ of the $J\times(k+d)$ matrix with columns $\mathbf{b}_1,\dots,\mathbf{b}_J$ has $\det(M_2)\asymp \lambda_1\cdots\lambda_J$. (If all such submatrices had determinant bounded by $\delta\lambda_1\cdots \lambda_J$, then by expanding the determinant of $M_1$ into a sum of such determinants, and using $\|\mathbf{b}_j\|_{\infty}\le \lambda_j$, we see the determinant of $M_1$ would be $O_k(\delta\lambda_1\cdots\lambda_{k+d})$, contradicting our lower bound if $\delta$ is sufficiently small in terms of $k$ and $d$). Similarly, we see that there is some choice of $\mathcal{I}=\{i_1,\dots,i_{J-d}\}\subseteq\{1,\dots,J\}$ such that the $(J-d)\times(J-d)$ submatrix $M_\mathcal{I}$ of $M_2$ formed by removing the final $d$ rows and removing the $i^{th}$ column for each $i\in\{1,\dots,J\}\setminus\mathcal{I}$ from $M_2$ satisfies
\[
\det(M_\mathcal{I})\gg \prod_{i\in\mathcal{I}}\lambda_i.
\]
(Consider expanding the determinant of $M_2$ via the bottom $d$ rows so that it is a sum of $O(1)$ of such determinants with the coefficient of $\det(M_\mathcal{I})$ of size $\ll \prod_{i\notin\mathcal{I}}\|\mathbf{b}_i\|_\infty\ll \prod_{i\notin\mathcal{I}}\lambda_i$.)

Let $\mathbf{b}_i'\in\mathbb{R}^{k}$ be the vector formed by removing the last $d$ coordinates of $\mathbf{b}_i$ for $1\le i\le k+d$. The above discussion implies that 
\[
\|\mathbf{b}_{i_1}'\wedge\dots\wedge\mathbf{b}_{i_{J-d}}'\|\ge \det(M_{\mathcal{I}})\gg \prod_{i\in\mathcal{I}}\lambda_i \ge \prod_{i\in\mathcal{I}}\|\mathbf{b}_i'\|_\infty,
\]
since one of the $(J-d)\times(J-d)$ submatrices formed from taking $J-d$ rows from $\mathbf{b}_{i_1}',\dots,\mathbf{b}_{i_{J-d}}'$ is $M_{\mathcal{I}}$. (Recall that $M_{\mathcal{I}}$ was formed from $M_2$ be removing the final $d$ rows, and so cannot contain the row corresponding to the final $d$ coordinates of the $\mathbf{b}_i$.) Finally, on recalling that $N^{-1/(d+1)}\le \lambda_1\le \dots\le \lambda_J$ and $\lambda_1\cdots \lambda_J\ll N^{-1}$, we have
\[
\prod_{i\in\mathcal{I}}\|\mathbf{b}'_i\|_\infty\ll \|\mathbf{b}_{i_1}'\wedge\dots\wedge\mathbf{b}_{i_{J-d}}'\|\ll \prod_{i\in\mathcal{I}}\lambda_i\ll N^{d/(d+1)}\lambda_{1}\cdots \lambda_J\ll \frac{1}{N^{1/(d+1)}}.
\]
We now have the result of the lemma on putting $r=J-d$ and taking
\begin{align*}
\mathbf{h}^{(j)}&=(B_{1}(\mathbf{b}_{i_j})_1,\dots,B_{k}(\mathbf{b}_{i_j})_k)\in\mathbb{Z}^k,\\
a_\ell^{(j)}&=\eta (\mathbf{b}_{i_j})_{k+\ell}+\sum_{i=1}^k (\mathbf{b}_{i_j})_i\beta_{i,\ell}B_i\text{ for }1\le \ell\le d,
\end{align*}
 for $1\le j\le r=J-d$. Note that this choice has $\tilde{\mathbf{h}}^{(j)}=\mathbf{b}_{i_j}'$.
\end{proof}
%
%
%
%
\begin{lmm}\label{lmm:Lattices}
Let $H_1$ be a $r\times r$ invertible integer matrix and $H_2$ an $r\times \ell$ integer matrix. Let $\Lambda_1,\Lambda_2\subseteq\mathbb{Z}^r$ and $\Lambda_3\subseteq\mathbb{Z}^\ell$ be lattices defined by
\begin{align*}
\Lambda_1&=H_1(\mathbb{Z}^r),\\
\Lambda_2&=H_1(\mathbb{Z}^r)+H_2(\mathbb{Z}^{\ell}),\\
\Lambda_3&=\{\mathbf{y}\in\mathbb{Z}^{\ell}:\,\exists\,\mathbf{x}\in\mathbb{Z}^r\text{ s.t. }H_1\mathbf{x}=H_2\mathbf{y}\}.
\end{align*}
Then
\[
\det(\Lambda_1)=\det(\Lambda_3)\det(\Lambda_2).
\]
\end{lmm}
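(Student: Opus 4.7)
The plan is to interpret the identity as an index computation and then apply the first isomorphism theorem. Since $H_1$ is invertible, $\Lambda_1$ is a full-rank sublattice of $\mathbb{R}^r$, and $\Lambda_1 \subseteq \Lambda_2 \subseteq \mathbb{Z}^r$, so $\Lambda_2$ is also a full-rank sublattice. The standard multiplicativity of covolumes for a chain of full-rank sublattices gives
\[
\det(\Lambda_1) = [\Lambda_2 : \Lambda_1]\,\det(\Lambda_2),
\]
so it suffices to identify the index $[\Lambda_2 : \Lambda_1]$ with $\det(\Lambda_3) = [\mathbb{Z}^\ell : \Lambda_3]$.

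To do this I would define the group homomorphism
\[
\phi: \mathbb{Z}^\ell \longrightarrow \Lambda_2/\Lambda_1, \qquad \mathbf{y} \longmapsto H_2 \mathbf{y} + \Lambda_1.
\]
Surjectivity is immediate from the definition of $\Lambda_2$: any element of $\Lambda_2$ has the shape $H_1\mathbf{x} + H_2 \mathbf{y}$, and the first summand vanishes modulo $\Lambda_1$. The kernel is exactly
\[
\ker(\phi) = \{\mathbf{y} \in \mathbb{Z}^\ell : H_2 \mathbf{y} \in H_1(\mathbb{Z}^r)\} = \Lambda_3,
\]
by the definition of $\Lambda_3$. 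The first isomorphism theorem then yields $\mathbb{Z}^\ell / \Lambda_3 \cong \Lambda_2/\Lambda_1$.

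Combining these two steps, $[\Lambda_2 : \Lambda_1] = [\mathbb{Z}^\ell : \Lambda_3] = \det(\Lambda_3)$, and substituting into the chain identity for $\det(\Lambda_1)$ gives the claim. There is no real obstacle here: the only small check is that $\Lambda_3$ is itself a full-rank sublattice of $\mathbb{Z}^\ell$ (so that $[\mathbb{Z}^\ell : \Lambda_3]$ is finite and equals $\det(\Lambda_3)$), which holds because $H_1$ being invertible implies that for every $\mathbf{y} \in \mathbb{Z}^\ell$ some nonzero integer multiple $N \mathbf{y}$ satisfies $H_2(N\mathbf{y}) = H_1(N H_1^{-1} H_2 \mathbf{y})$ with $N H_1^{-1} H_2 \mathbf{y} \in \mathbb{Z}^r$ once $N$ is taken to be a common denominator of the entries of $H_1^{-1}H_2$; equivalently, $\Lambda_2/\Lambda_1$ is finite, forcing $\mathbb{Z}^\ell/\Lambda_3$ to be finite.
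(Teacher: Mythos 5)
Your proof is correct, and it takes a cleaner, more structural route than the paper's. The paper establishes the same identity by reducing everything modulo $D_1=\det(H_1)$ and counting solutions to congruences: it computes the number of representations $r(\mathbf{z})$ of $\mathbf{z}$ as $H_1\mathbf{x}-H_2\mathbf{y}$ over a fundamental domain for $D_1\mathbb{Z}^{r+\ell}$, observes that $r(\mathbf{z})$ is either $0$ or $r(\mathbf{0})$, and then factors $r(\mathbf{0})$ as a product of two further counts, one of which is $[\Lambda_3:D_1\mathbb{Z}^\ell]^{-1}D_1^\ell$ and the other the size of the kernel of $H_1$ acting mod $D_1$. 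That computation is really an unwound, finite-group version of exactly the isomorphism you exhibit: your map $\phi:\mathbb{Z}^\ell\to\Lambda_2/\Lambda_1$, $\mathbf{y}\mapsto H_2\mathbf{y}+\Lambda_1$, is surjective by the definition of $\Lambda_2$ and has kernel $\Lambda_3$ by the definition of $\Lambda_3$, so $\mathbb{Z}^\ell/\Lambda_3\cong\Lambda_2/\Lambda_1$ and the identity follows from multiplicativity of covolumes along $\Lambda_1\subseteq\Lambda_2\subseteq\mathbb{Z}^r$. What your approach buys is brevity and the avoidance of the bookkeeping over residues mod $D_1$; what the paper's buys is that it never needs to invoke the isomorphism theorem or discuss infinite quotients, staying entirely within elementary counting. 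You also correctly handle the one point that genuinely needs checking, namely that $\Lambda_3$ has full rank (equivalently finite index in $\mathbb{Z}^\ell$), which follows either from $D_1\mathbb{Z}^\ell\subseteq\Lambda_3$ or, as you note, from finiteness of $\Lambda_2/\Lambda_1$ transported through $\phi$.
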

%
%
%
%
\begin{proof}
We first note that $\Lambda_1,\Lambda_2,\Lambda_3$ are all full rank since $H_1$ has non-zero determinant. Let $\Lambda_1$ have determinant $D_1=\det(H_1)$ and $\Lambda_2$ have determinant $D_2$. Since $\Lambda_1\subseteq \Lambda_2\subseteq\mathbb{Z}^r$, $D_1$ and $D_2$ are integers with $D_2|D_1$. Since $D_1\mathbb{Z}^r\subseteq\Lambda_1$, we have that $D_1\mathbb{Z}^{\ell}\subseteq\Lambda_3$. We also have that $D_1\mathbb{Z}^{r}\subseteq D_2\mathbb{Z}^r\subseteq\Lambda_2$. Thus, letting $[D_1]$ denote $\{1,\dots,D_1\}$, we have 
\[
\frac{D_1^r}{\det(\Lambda_2)}=\#\{\mathbf{z}\in[D_1]^r:\,\exists \,\mathbf{x}\in[D_1]^r,\mathbf{y}\in[D_1]^{\ell}\text{ s.t. }H_1\mathbf{x}-H_2\mathbf{y}=\mathbf{z}\Mod{D_1}\}.
\]
 The number of representations $r(\mathbf{z})$ of $\mathbf{z}$ as $H_1\mathbf{x}-H_2\mathbf{y}$ with $\mathbf{x}\in[D_1]^r$, $\mathbf{y}\in[D_1]^\ell$ is either 0 (if $\mathbf{z}\notin\Lambda_2$) or equal to $r(\mathbf{0})$ (if $\mathbf{z}\in\Lambda_2$) by linearity. Thus, since $\sum_{\mathbf{z}\in[D_1]^r}r(\mathbf{z})=D_1^{\ell+r}$, we have
\[
\frac{D_1^r}{\det(\Lambda_2)}=\frac{D_1^{\ell+r}}{r(\mathbf{0})}.
\]
But then we have that for any given $\mathbf{y}\in[D_1]^\ell$, the number $r_2(\mathbf{y})$ of $\mathbf{x}\in [D_1]^r$ such that $H_1\mathbf{x}=H_2\mathbf{y}\pmod{D_1}$ is either $0$ (if $H_2\mathbf{y}\notin\Lambda_1$) or is equal to $r_2(\mathbf{0})$ (if $H_2\mathbf{y}\in\Lambda_1$). Thus
\begin{align*}
r(\mathbf{0})&=\#\{\mathbf{y}\in[D_1]^{\ell}:\,\exists\,\mathbf{x}\in[D_1]^r\text{ s.t. }H_1\mathbf{x}=H_2\mathbf{y}\pmod{D_1}\}\cdot r_2(\mathbf{0})\\
&=\frac{D_1^{\ell}}{\det(\Lambda_3)}\cdot \#\{\mathbf{x}\in[D_1]^r:\,H_1\mathbf{x}=\mathbf{0}\pmod{D_1}\}\\
&=\frac{D_1^{\ell}}{\det(\Lambda_3)}\cdot \frac{D_1^r}{[D_1\mathbb{Z}^r:\Lambda_1]}.
\end{align*}
But $\det(\Lambda_1)=D_1=[\Lambda_1:\mathbb{Z}^r]$, and $[D_1\mathbb{Z}^r:\mathbb{Z}^r]=D_1^r$ so $[D_1\mathbb{Z}^r:\Lambda_1]=D_1^{r}/\det(\Lambda_1)$. Thus we have
\[
\frac{D_1^r}{\det(\Lambda_2)}=\frac{D_1^{\ell+r}}{r(\mathbf{0})}=\det(\Lambda_3)\cdot [D_1\mathbb{Z}^r:\Lambda_1]=D_1^r\frac{\det(\Lambda_3)}{\det(\Lambda_1)}.\qedhere
\]
\end{proof}
%
%
%
%
\begin{lmm}[Orthogonal relations give rise to reduced dimension problem]\label{lmm:ReduceDim}
Let $C>2$ and $f_1,\dots,f_k\in\mathbb{R}[X]$ be polynomials of degree at most $d$ with $f_1(0)=\dots=f_k(0)=0$. Put $f_i(X)=\sum_{j=1}^df_{i,j}X^j$.

Let $B_1,\dots,B_k\ge 1$ and $q_0\in \mathbb{Z}_{>0}$. Let $\eta\in[0,1/100]$ be such that
\[
\eta<\frac{q_0^C}{x}.
\]
Let $r\in \{1,\dots,k\}$ and $\mathbf{h}^{(1)},\dots,\mathbf{h}^{(r)}\in\mathbb{Z}^k$ and $\mathbf{a}^{(1)},\dots,\mathbf{a}^{(r)}\in\mathbb{Z}^d$ satisfy:
\begin{enumerate} 
\item  ${h}^{(\ell)}_i\le B_i$ for $1\le i\le k$ and $1\le \ell\le r$.
\item For $1\le \ell\le r$ and $1\le j\le d$ we have
\[
\Bigl|\sum_{i=1}^k h^{(\ell)}_i f_{i,j}-\frac{a^{(\ell)}_j}{q_0}\Bigr|\le\eta^j.
\]
\item Put $\tilde{h}^{(\ell)}_i= h^{(\ell)}_i/B_i$ for $1\le \ell \le r$ and $1\le i\le k$. We have
\[
\|\tilde{\mathbf{h}}^{(1)}\wedge\dots\wedge\tilde{\mathbf{h}}^{(r)}\|\asymp \|\tilde{\mathbf{h}}^{(1)}\|_\infty\cdots \|\tilde{\mathbf{h}}^{(r)}\|_\infty,
\]
and
\[
\|\tilde{\mathbf{h}}^{(1)}\|_\infty\cdots \|\tilde{\mathbf{h}}^{(r)}\|_\infty\ll \frac{1}{q_0^{1/C}}.
\]
\end{enumerate}
Then there is an integer $k'<k$, real polynomials $g_1,\dots,g_{k'}\in\mathbb{R}[X]$ of degree at most $d$ with $g_1(0)=\dots=g_k(0)=0$ and quantities $B_1',\dots,B_{k'}'\ge 2$ and $y<x$ such that:
\begin{enumerate}
\item (Approximations in the new system produce approximations in the old system) If there is an integer $n'<y$ such that
\[
\|g_i(n')\|_{\mathbb{R}/\mathbb{Z}}<\frac{1}{B_i'}\qquad \text{for all $1\le i\le k'$}
\]
then there is an integer $n<x$ such that
\[
\|f_i(n)\|_{\mathbb{R}/\mathbb{Z}}<\frac{1}{B_i}\qquad \text{for all $1\le i\le k$}.
\]
\item (Increased density of approximations) We have
\[\frac{y}{(B_1'\cdots B_{k'}')^{3C^2-C^2/k'{}^3}}\gg \frac{x}{(B_1\cdots B_k)^{3C^2-C^2/k^3-C^2/k^4}}.\]
\end{enumerate}
All implied constants depend only on $k$ and $d$.
\end{lmm}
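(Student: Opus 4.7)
The plan is to exploit the $r$ approximate Diophantine relations to reduce to a system with $k':=k-r$ polynomials, by restricting $n$ to the arithmetic progression $n=qm$ with $q:=q_0D$ for an appropriate $r\times r$ subdeterminant $D$ of the matrix $(h_i^{(\ell)})$. The new polynomials will simply be $f_{r+1}(qm),\dots,f_k(qm)$, and the bounds on $f_1(n),\dots,f_r(n)$ will follow automatically from the relations combined with the quasi-orthogonality of condition~(3).

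For the setup: since $\|\tilde{\mathbf h}^{(1)}\wedge\dots\wedge\tilde{\mathbf h}^{(r)}\|$ is the Euclidean norm of the vector of all $\binom{k}{r}$ $r\times r$ subdeterminants of the $r\times k$ matrix $(\tilde h_i^{(\ell)})$, and this wedge norm is $\asymp\prod_\ell\|\tilde{\mathbf h}^{(\ell)}\|_\infty$, some such subdeterminant is already of comparable size; after permuting the indices $\{1,\dots,k\}$ (and relabelling the $f_i$ and $B_i$ accordingly) I assume this is the leading block. Set $H:=(h_i^{(\ell)})_{1\le i,\ell\le r}$, $H':=(h_i^{(\ell)})_{1\le\ell\le r,\,r+1\le i\le k}$, $\tilde H:=H\operatorname{diag}(B_1,\dots,B_r)^{-1}$, and $D:=|\det H|$. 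Then $D=B_1\cdots B_r\,|\det\tilde H|\asymp B_1\cdots B_r\prod_\ell\|\tilde{\mathbf h}^{(\ell)}\|_\infty$, and condition~(3) gives $D\ll B_1\cdots B_r\,q_0^{-1/C}$. I take $q:=q_0D$, $y:=cx/q$ for a small constant $c=c(k,d)>0$, $k':=k-r$, $g_j(m):=f_{r+j}(qm)$ for $1\le j\le k'$, and $B_j':=C'B_{r+j}$ for a large constant $C'=C'(k,d)$ used to absorb implicit constants below.

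For the implication: given $m<y$ with $\|g_j(m)\|<1/B_j'$ for all $j$, the bound $\|f_i(qm)\|=\|g_{i-r}(m)\|<1/B_i$ for $i>r$ is immediate. For $i\le r$ I substitute $n=qm$ into the relations. The key algebraic observation is that $(qm)^j/q_0=q_0^{j-1}D^j m^j$ is divisible by $D$ for every $j\ge 1$, so
\[
\sum_{i=1}^k h_i^{(\ell)}f_i(qm)=D\,M^{(\ell)}(m)+O\!\Bigl(\sum_{j=1}^d(qm\eta)^j\Bigr)
\]
for some integer $M^{(\ell)}(m)$, where the error is made $\ll\min_\ell\|\tilde{\mathbf h}^{(\ell)}\|_\infty/C'$ by choosing $c$ small enough (using $\eta<q_0^C/x$ and $qm<x$). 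Splitting $H\mathbf f^{(\le r)}(qm)+H'\mathbf f^{(>r)}(qm)$ and using $|h_i^{(\ell)}|=B_i|\tilde h_i^{(\ell)}|$ together with $\|f_i(qm)\|<1/(C'B_i)$ for $i>r$ yields
\[
H\,\mathbf f^{(\le r)}(qm)\equiv\mathbf b\pmod{\mathbb Z^r},\qquad \|\mathbf b\|_\infty\ll_{k,d}\max_\ell\|\tilde{\mathbf h}^{(\ell)}\|_\infty/C'.
\]
Inverting via $H^{-1}=\operatorname{diag}(B_1,\dots,B_r)^{-1}\tilde H^{-1}$, Hadamard's inequality combined with $|\det\tilde H|\asymp\prod_\ell\|\tilde{\mathbf h}^{(\ell)}\|_\infty$ gives $|(\tilde H^{-1})_{i,\ell}|\ll 1/\|\tilde{\mathbf h}^{(\ell)}\|_\infty$ and hence $|(H^{-1})_{i,\ell}|\ll 1/(B_i\|\tilde{\mathbf h}^{(\ell)}\|_\infty)$. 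Applying $H^{-1}$ to $\mathbf b$ then gives $\|f_i(qm)\|\ll_{k,d}1/(B_iC')<1/B_i$ for $C'$ large. Lemma~\ref{lmm:Lattices} is invoked here to confirm that the integer correction $\operatorname{adj}(H)\mathbf M(m)$ coming from the inversion lies in $\mathbb Z^r$, so no additional residue-class restriction on $m$ is needed.

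For the density comparison, the ratio of new to old density is, up to constants,
\[
\frac{1}{q_0D}\prod_{i\le r}B_i^{\alpha}\prod_{i>r}B_i^{\beta},\qquad \alpha:=3C^2-\tfrac{C^2}{k^3}-\tfrac{C^2}{k^4},\quad\beta:=C^2\Bigl(\tfrac{1}{(k')^3}-\tfrac{1}{k^3}-\tfrac{1}{k^4}\Bigr).
\]
Using $D\ll B_1\cdots B_r\,q_0^{-1/C}$ together with the induced bound $q_0\ll(B_1\cdots B_r)^C$ (from $D\ge 1$), this is $\gg\prod_{i\le r}B_i^{\alpha-C}\prod_{i>r}B_i^\beta\gg 1$, since both $\alpha-C$ and $\beta$ are positive (the latter because $1/(k')^3-1/k^3\gtrsim 3/k^4$ for $r\ge 1$). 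The main obstacle is precisely this density bookkeeping: the exponents $3C^2-C^2/k^3-C^2/k^4$ (old) and $3C^2-C^2/(k')^3$ (new) have been calibrated so that the tiny gain $\beta\asymp 2C^2/k^4$ in the tight case $r=1$ just barely outweighs the loss of the factor $q_0^{1-1/C}\ll(B_1\cdots B_r)^{C-1}$, and this explains the awkward $C^2/k^4$ correction in the statement. The inversion step, by contrast, is essentially routine once quasi-orthogonality and the factorization $H=\tilde H\operatorname{diag}(B_i)$ are exploited.
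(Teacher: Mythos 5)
There is a genuine gap at the heart of your implication step. Write $\mathbf{b}^{(>r)}\in\mathbb{Z}^{k-r}$ for the vector of nearest integers to $(f_{r+1}(qm),\dots,f_k(qm))$. Your congruence $H\,\mathbf f^{(\le r)}(qm)\equiv\mathbf b\pmod{\mathbb Z^r}$ is correct, but it does not give $\|f_i(qm)\|_{\mathbb{R}/\mathbb{Z}}\ll 1/(B_iC')$ for $i\le r$: inverting, one gets
\[
\mathbf f^{(\le r)}(qm)=\operatorname{adj}(H)\mathbf M(m)-H^{-1}H'\,\mathbf b^{(>r)}+H^{-1}\mathbf b ,
\]
and while the first term is integral and the last is small, the middle term $H^{-1}H'\mathbf b^{(>r)}=\tfrac{1}{D}\operatorname{adj}(H)H'\mathbf b^{(>r)}$ is in general only a rational vector with denominator $D$; its distance from $\mathbb{Z}^r$ can be as large as $1/2$. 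Your appeal to Lemma \ref{lmm:Lattices} to say ``no additional residue-class restriction is needed'' addresses the wrong term (the inhomogeneous part $\operatorname{adj}(H)\mathbf M$, which is indeed integral once $q_0D\mid n$), not this one. Since $\mathbf b^{(>r)}$ is forced on you by $m$, choosing $g_j(m)=f_{r+j}(qm)$ and $B_j'=C'B_{r+j}$ gives you no control over which coset of $H^{-1}H'(\mathbb{Z}^{k-r})$ mod $\mathbb{Z}^r$ you land in, so the claimed bounds on $\|f_i(qm)\|$ for $i\le r$ fail.

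This is precisely the difficulty the paper's proof is built around: one must constrain $\mathbf b^{(>r)}$ to lie in a prescribed coset of the lattice $\Lambda_3=\{\mathbf y\in\mathbb{Z}^{k-r}:H_2\mathbf y\in H_1(\mathbb{Z}^r)\}$. Doing so changes the reduced system --- the new polynomials become $Z^{-1}(\tilde f_{r+1},\dots,\tilde f_k)^{T}$ for a Minkowski-reduced basis matrix $Z$ of $\Lambda_3$, and the new accuracies become $B_i'\asymp B_{r+i}\|\mathbf z_i\|_\infty$ --- so that $\prod_i B_i'$ picks up the factor $\det(\Lambda_3)=D_1/D_2$ (this is what Lemma \ref{lmm:Lattices} is actually for). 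That loss is what the gain $D_1\ll B_1\cdots B_r\,q_0^{-1/C}$ from hypothesis (3) must be weighed against, and it is why the density bookkeeping is genuinely tight rather than, as in your version, trivially favourable. Your wedge-product/quasi-orthogonality manipulations and the choice of modulus $q_0D$ for $n$ match the paper, but the lattice-coset step is the essential content of the lemma and is missing from your argument.
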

%
%
%
%
\begin{proof}
Let $\tilde{M}$ be the $r\times k$ matrix with rows $\tilde{\mathbf{h}}^{(1)},\dots,\tilde{\mathbf{h}}^{(r)}$, and $M$ be the $r\times k$ matrix with rows $\mathbf{h}^{(1)},\dots,\mathbf{h}^{(r)}$. Let $\tilde{H}_1$ be the $r\times r$ submatrix of $\tilde{M}$ with largest determinant (in absolute value). By permuting the coordinates, (and permuting the $f_i$), we may assume that $\tilde{H}_1$ is formed by taking the first $r$ columns of $\tilde{M}$. Let $\tilde{H}_2$ be the $r\times (k-r)$ matrix formed by taking the final $k-r$ columns of $\tilde{M}$. Similarly, let $H_1$ be the matrix formed by taking the first $r$ columns of $M$, and $H_2$ formed by taking the last $k-r$ columns of $M$.

By the set-up of the lemma, we have for each $j\in \{1,\dots,d\}$
\begin{equation}
\tilde{H}_1\begin{pmatrix} f_{1,j} B_1 \\  \vdots \\ f_{r,j} B_r\end{pmatrix} = -\tilde{H}_2\begin{pmatrix}f_{r+1,j}B_{r+1}\\ \vdots \\ f_{k,j} B_k\end{pmatrix}+\frac{1}{q_0}\begin{pmatrix} a^{(1)}_j\\ \vdots \\ a^{(r)}_j \end{pmatrix} + O(\eta^j).
\label{eq:LinearSystem}
\end{equation}
By construction, $\det(\tilde{H}_1)$ is the largest determinant of any $r\times r$ submatrix of $M$ formed with columns $\tilde{\mathbf{h}}^{(1)},\dots,\tilde{\mathbf{h}}^{(r)}$, and so $\det(\tilde{H}_1)\gg \|\tilde{\mathbf{h}}^{(1)}\wedge\dots\wedge\tilde{\mathbf{h}}^{(r)}\|$. By the assumption of the lemma we have $ \|\tilde{\mathbf{h}}^{(1)}\wedge\dots\wedge\tilde{\mathbf{h}}^{(r)}\|\gg \|\tilde{\mathbf{h}}^{(1)}\|_\infty\cdots\|\tilde{\mathbf{h}}^{(r)}\|_\infty$, and so $\det(\tilde{H}_1)\asymp \|\tilde{\mathbf{h}}_1\|_\infty\cdots\|\tilde{\mathbf{h}}_r\|_\infty$. It follows that $(H^{-1})_{i,j}\ll \|\tilde{\mathbf{h}}_j\|_\infty^{-1}$ for all $1\le i,j\le r$.

We multiply \eqref{eq:LinearSystem} by $n^j$ and then sum over $j\in\{1,\dots,d\}$. Rearranging, we find that for any choice of $b_1,\dots,b_k\in\mathbb{Z}$, we have
\begin{align*}
\begin{pmatrix} (f_1(n)-b_1)B_1 \\  \vdots \\ (f_r(n)-b_r)B_r\end{pmatrix}& = -\tilde{H}_1^{-1}\tilde{H}_2\begin{pmatrix}(f_{r+1}(n)-b_{r+1})B_{r+1}\\ \vdots \\ (f_k(n)-b_k)B_k\end{pmatrix}\\
& + \tilde{H}_1^{-1}\left(H_2\begin{pmatrix}b_{r+1}\\ \vdots\\ b_k\end{pmatrix}- H_1\begin{pmatrix}b_1\\ \vdots\\ b_r\end{pmatrix}+\frac{1}{q_0}\begin{pmatrix} \sum_{j=1}^d a^{(1)}_j n^j\\ \vdots \\ \sum_{j=1}^d a^{(r)}_j n^j \end{pmatrix}\right)\\
&+O\left( (n\eta+n^d\eta^d)\begin{pmatrix}
1/\|\tilde{\mathbf{h}}_1\|_\infty\\ \vdots \\ 1/\|\tilde{\mathbf{h}}_r\|_\infty\end{pmatrix}\right).
\end{align*}
We note that $(\tilde{H}_2)_{i,j}\ll \|\tilde{\mathbf{h}}^{(i)}\|_\infty$ for all $1\le i\le r$ and $1\le j\le k-r$. Recalling that $(\tilde{H}_1^{-1})_{i,j}\ll \|\tilde{\mathbf{h}}^{(j)}\|_\infty^{-1}$ for all $1\le i,j\le r$, we see that all entries of $\tilde{H}_1^{-1}\tilde{H}_2$ are of size $O(1)$. In particular, if $b_1,\dots, b_k$ are such that for each $r+1\le j\le k$
\begin{equation}
|f_j(n)-b_j|\le \frac{\delta}{B_j},
\label{eq:ApproxCondition}
\end{equation}
then we have that 
\[
 -\tilde{H}_1^{-1}\tilde{H}_2\begin{pmatrix}(f_{r+1}(n)-b_{r+1})B_{r+1}\\ \vdots \\ (f_k(n)-b_k)B_k\end{pmatrix}=O(\delta).
\]
If we have $\delta<1$ and
\begin{equation}
n\le \frac{\delta \min_i\|\tilde{\mathbf{h}}_i\|_\infty}{\eta},
\label{eq:SizeCondition}
\end{equation}
then (recalling $\|\tilde{\mathbf{h}}_i\|_\infty\le 1$)
\[
(n\eta+n^d\eta^d)\begin{pmatrix}1/\|\tilde{\mathbf{h}}_1\|_\infty\\ \vdots \\ 1/\|\tilde{\mathbf{h}}_r\|_\infty\end{pmatrix}=O(\delta).
\]
Thus, if \eqref{eq:ApproxCondition} and \eqref{eq:SizeCondition} hold and also we have
\begin{equation}
H_1\begin{pmatrix}b_1\\ \vdots\\ b_r\end{pmatrix}-H_2\begin{pmatrix}b_{r+1}\\ \vdots\\ b_k\end{pmatrix}=\frac{1}{q_0}\begin{pmatrix} \sum_{j=1}^d a^{(1)}_j n^j\\ \vdots \\ \sum_{j=1}^d a^{(r)}_j n^j \end{pmatrix},
\label{eq:LatticeCondition}
\end{equation} 
then we have for each $1\le i\le r$
\[
\|f_i(n)\|_{\mathbb{R}/\mathbb{Z}}\ll \frac{\delta}{B_i}.
\]
In particular, we have $\|f_i(n)\|_{\mathbb{R}/\mathbb{Z}}\le 1/B_i$ for $1\le i \le r$ if $\delta$ is chosen to be a sufficiently small constant (depending only on $k$ and $d$) provided \eqref{eq:LatticeCondition}, \eqref{eq:ApproxCondition} and \eqref{eq:SizeCondition} hold.

Let $\Lambda_1\subseteq\Lambda_2\subseteq\mathbb{Z}^r$ be full-rank lattices defined in terms of the matrices $H_1,H_2$ by
\[
\Lambda_1=H_1(\mathbb{Z}^r),\qquad \Lambda_2=H_1(\mathbb{Z}^r)+H_2(\mathbb{Z}^{k-r}).
\]
(They are both full rank since $H_1$ has non-zero determinant.) Let $\Lambda_1$ have determinant $D_1=\det(H_1)$ and $\Lambda_2$ have determinant $D_2$. Since $\Lambda_1\subseteq\Lambda_2\subseteq\mathbb{Z}^r$, $D_1$ and $D_2$ are integers with $D_2|D_1$. Any sublattice of $\mathbb{Z}^r$ with determinant $D_2$ contains $D_2\mathbb{Z}^r$. Therefore for each $j\in \{1,\dots,d\}$ there exists a choice of $b_{1,j}',\dots,b_{k,j}'\in\mathbb{Z}$ such that
\[
H_1\begin{pmatrix}b_{1,j}'\\ \vdots\\ b_{r,j}'\end{pmatrix}-H_2\begin{pmatrix}b_{r+1,j}'\\ \vdots\\ b_{k,j}'\end{pmatrix}=D_2^j q_0^{j-1}\begin{pmatrix}a^{(1)}_j\\ \vdots \\ a^{(r)}_j \end{pmatrix}.
\]
We restrict our attention to $b_i$ of the form $b_i=\sum_{j=1}^d b_{i,j}'n^j/q_0^j D_2^j+b_i''$ for $1\le i\le k$, where $b_1'',\dots,b_k''\in\mathbb{Z}$ satisfy
\begin{equation}
H_1\begin{pmatrix}b_1''\\ \vdots\\ b_r''\end{pmatrix}-H_2\begin{pmatrix}b_{r+1}''\\ \vdots\\ b_k''\end{pmatrix}=0.
\label{eq:LatticeRestriction}
\end{equation}
To ensure that $b_1,\dots,b_k\in\mathbb{Z}$, we will restrict our consideration to integers $n$ such that $D_2q_0|n$.

The equation \eqref{eq:LatticeRestriction} forces $(b_{r+1}'',\dots,b_k'')$ to lie in a rank $r-k$ lattice $\Lambda_3$, given by
\[
\Lambda_3=\{\mathbf{y}\in\mathbb{Z}^{k-r}:\,\exists\mathbf{x}\in\mathbb{Z}^r\text{ s.t. }H_1\mathbf{x}=H_2\mathbf{y}\}.
\]
By Lemma \ref{lmm:Lattices}, $\Lambda_3$ has determinant $D_1/D_2$.

Let $\mathbf{z}_1,\dots,\mathbf{z}_{r-k}$ be a Minkowski-reduced basis for $\Lambda_3$, so in particular $D_1/D_2=\det(\Lambda_3)\asymp\|\mathbf{z}_1\|_\infty\cdots\|\mathbf{z}_{k-r}\|_\infty$. Let $n=D_2q_0n'$ for some $n'\in\mathbb{Z}$. We see that we can find $(b_1,\dots,b_k)$ satisfying the conditions \eqref{eq:LatticeCondition} and \eqref{eq:ApproxCondition} provided we can find $m_1,\dots, m_{k-r}\in\mathbb{Z}$ such that for $r+1\le i\le k$ we have
\begin{align}
\Bigl|f_i(D_2q_0n')-\sum_{j=1}^d b_{i,j}'(n')^j -\sum_{i=1}^{k-r}m_i(\mathbf{z}_i)_{j-r}\Bigr|&=\Bigl|\tilde{f}_i(n')-\sum_{i=1}^{k-r}m_i(\mathbf{z}_i)_{j-r}\Bigr|\nonumber\\
&\le \frac{\delta}{B_j}.
\label{eq:LatticeApprox}
\end{align}
Here we have set $\tilde{f}_i(X)\in\mathbb{R}[X]$ to be the polynomial
\[
\tilde{f}_i(X)=f_i(D_2q_0X)-\sum_{j=1}^d b'_{i,j}X^j
\]
for each $i\in \{r+1,\dots,k\}$. We note that $\tilde{f}$ had degree at most $d$ and has $\tilde{f}_i(0)=0$.

Let $Z$ be the $(k-r)\times(k-r)$ matrix with columns $\mathbf{z}_1,\dots,\mathbf{z}_{r-k}$. Since $\det(Z)\asymp\|\mathbf{z}_1\|_\infty\cdots\|\mathbf{z}_{r-k}\|_\infty$ we have that $Z^{-1}_{i,j}\ll \|\mathbf{z}_j\|_\infty^{-1}$. Thus we see that if we can find $n'$ and $m_1,\dots,m_{k-\ell}$ such that
\[
\left|Z^{-1}\begin{pmatrix} \tilde{f}_{r+1}(n')\\ \vdots\\\tilde{f}_k(n') \end{pmatrix}-\begin{pmatrix} m_1\\ \vdots \\ m_{k-r}\end{pmatrix} \right|
\le \delta^2 \begin{pmatrix}1/(B_{r+1}\|\mathbf{z}_1\|_\infty) \\ \vdots\\ 1/(B_k\|\mathbf{z}_{r-k}\|_\infty) \end{pmatrix},
\]
then \eqref{eq:LatticeApprox} holds if $\delta$ is a sufficiently small constant (depending only on $k$). We now let $k'=k-r$ and $B_i'=\delta^{-2}B_{r+i}\|\mathbf{z}_i\|_\infty$ and $g_1,\dots,g_{k-r}\in\mathbb{R}[X]$ be given by
\[
 \begin{pmatrix}g_1(n')\\ \vdots\\ g_{k-r}(n') \end{pmatrix}=Z^{-1}\begin{pmatrix} \tilde{f}_{r+1}(n')\\ \vdots\\ \tilde{f}_k(n') \end{pmatrix}.
\]
We see that the $g_i$ are polynomials of degree at most $d$ with $g_i(0)=0$ since the $\tilde{f}_i$ are. Finally, we put $y=\delta x \min_i\|\tilde{\mathbf{h}}_i\|_\infty/(q_0^{C+1}D_2)$, and note that since $\eta<q_0^C/x$ we have $y<\delta \min_i\|\tilde{\mathbf{h}}_i\|_\infty/(\eta q_0D_2)$.

Putting everything together, we see that if there is an $n'<y$ such that
\[
\|g(n')\|_{\mathbb{R}/\mathbb{Z}}\le \frac{1}{B_i'}
\]
for $1\le i\le k'=k-r$, then there is an $n=n' q_0 D_2$ with $n<x$ and $n<\delta\min_i\|\tilde{\mathbf{h}}_i\|_\infty/\eta$ such that
\[
\|f_i(n)\|_{\mathbb{R}/\mathbb{Z}}\le \frac{1}{B_i}
\]
for $1\le i\le k$.

Thus we are left to verify the size estimates with this choice of $B_1',\dots,B_{k-r}'$ and $y$. We have that (recalling that $1/\delta=O(1)$)
\begin{align*}
\prod_{i=1}^{k-r}B_i'&=\frac{\|\mathbf{z}_1\|_\infty\cdots\|\mathbf{z}_{k-r}\|_\infty\prod_{i=r+1}^k B_i}{\delta^{2(k-r)}}\\
&\ll \frac{D_1\prod_{i=r+1}^k B_i }{D_2}.
\end{align*}
This implies that
\begin{align*}
\frac{y}{(\prod_{i=1}^{k-r}B_i')^{C_2}}&=\frac{\delta x \min_i\|\tilde{\mathbf{h}}_i\|_\infty}{q_0^{C+1} D_2(\prod_{i=1}^{k-r}B_i')^{C_2}}\\
&\gg \frac{x D_2^{C_2}\min_i\|\tilde{\mathbf{h}}_i\|_\infty}{ q_0^{C+1} D_1^{C_2}(\prod_{i=r+1}^k B_i )^{C_2}}.
\end{align*}
We recall that $D_1=\det(H_1)\asymp \|\mathbf{h}_1\|_\infty\cdots\|\mathbf{h}_r\|_\infty\ll B_1\cdots B_r/q_0^{1/C}$, and that $\min_i\|\tilde{\mathbf{h}}_i\|_\infty\gg \prod_{i=1}^r\|\tilde{\mathbf{h}}_i\|_\infty\gg  D_1/(B_1\cdots B_r)$.  This gives
\begin{align*}
\frac{y}{(\prod_{i=1}^{k-r}B_i')^{C_2}}&\gg \frac{x}{(\prod_{i=r+1}^k B_i )^{C_2}} \frac{D_2^{C_2-1} }{D_1^{C_2-1}q_0^{C+1} B_1\cdots B_r}\\
&\gg \frac{x}{(\prod_{i=1}^k B_i )^{C_2} } q_0^{(C_2-1)/C-C-1}D_2^{C_2-1}.
\end{align*}
Finally, we choose $C_2=3C^2-C^2/(k-r)^3$. Since $D_2,q_0\ge 1$ and $(C_2-1)/C-C-1>0$, this gives
\[
\frac{y}{(\prod_{i=1}^{k-r}B_i')^{3C^2-C^2/(k-r)^3}}\gg \frac{x}{(B_1\cdots B_r)^{3C^2-C^2/(k-r)^3}}.
\]
Since $k>r\ge 1$ we have $1/(k-r)^3\ge 1/k^3+1/k^4$, which gives the result.
\end{proof}
%
%
%
%
\begin{proof}[Proof of Proposition \ref{prpstn:ReduceDim}]
Lemma \ref{lmm:Orthogonal} (taking $\beta_{i,j}=f_{i,j}$, $\eta=Q^C/x$ and $B_i\ll \epsilon_i^{-1}\Delta^{-2/(2k)^4}$) shows that if the assumptions of Proposition \ref{prpstn:ReduceDim} hold then we can find a subset of essentially orthogonal generators. Using these generators in Lemma \ref{lmm:ReduceDim} then gives the required conclusion. Indeed, the first two claims of the proposition are clear. For the final claim we note that
\begin{align*}
y (\epsilon_1'\cdots \epsilon_{k'}')^{3C^2-C^2/k'{}^3}&=\frac{y}{(B_1'\cdots B_{k'}')^{3C^2-C^2/k'{}^3}}\\
&\gg \frac{x}{(B_1\cdots B_k)^{3C^2-C^2/k^3-C^2/k^4}}\\
&=x(\epsilon_1\cdots \epsilon_k\Delta^{2/(2k)^4})^{(3C^2-C^2/k^3-C^2/k^4)}\\
&\gg x(\epsilon_1\cdots \epsilon_k)^{3C^2-C^2/k^3}.
\end{align*}
This gives the final claim, establishing Proposition \ref{prpstn:ReduceDim}. 
\end{proof}
%
%
%
%
\bibliographystyle{plain}
\bibliography{Polys}

\end{document}